\newtheorem{thm}{Theorem}[section]
\newtheorem{lem}[thm]{Lemma}
\newtheorem{prop}[thm]{Proposition}
\newtheorem{conj}[thm]{Conjecture}
\theoremstyle{definition}
\theoremstyle{definition}
\newtheorem{defi}[thm]{Definition}
\newtheorem*{definition-non}{Definition}
\theoremstyle{remark}
\newenvironment{rmk}
  {\pushQED{\qed}\rmkx}
  {\popQED\endrmkx}
\newcommand{\eps}{\varepsilon}
\newcommand{\rn}{\mathbb R^n}
\newcommand{\sn}{S^{n-1}}
\newcommand{\kn}{\mathcal K^n}
\newcommand{\kno}{\mathcal K^n_o}
\newcommand{\po}{\mathcal P}
\newcommand{\fn}{\mathfrak{n}}
\newcommand{\bla}{\raise.2ex\hbox{$\scriptstyle\pmb \langle$}}
\newcommand{\sbla}{\raise.1ex\hbox{$\scriptscriptstyle\pmb \langle$}}
\newcommand{\bra}{\raise.2ex\hbox{$\scriptstyle\pmb \rangle$}}
\newcommand{\sbra}{\raise.1ex\hbox{$\scriptscriptstyle\pmb \rangle$}}
\newcommand{\balpha}{\pmb{\alpha}}
\numberwithin{equation}{section}
\begin{document}

\title{The Gauss Image Problem with weak Aleksandrov condition}

\author{Vadim Semenov}
\address{
Courant Institute of Mathematical Sciences, New York University, 251 Mercer St., New York, NY 10012, USA}
\curraddr{}
\email{vs1292@nyu.edu}

\subjclass[2010]{52A20, 52A38, 52A40, 52B11, 35J20, 35J96}

\keywords{Convex Geometry, The Gauss Image Problem, Aleksandrov Condition,  Monge-Amp\`ere equation, Aleksandrov Problem}

\date{\today}

\dedicatory{}


\begin{abstract}
We introduce a relaxation of the Aleksandrov condition for the Gauss Image
Problem. This weaker condition turns out to be a necessary condition for two measures to be related by a convex body. We provide several properties of the new condition. A solution to the Gauss Image Problem is obtained for the case when one of the measures is assumed to be discrete and the another measure is assumed to be absolutely continuous, under the new relaxed assumption. 
 \end{abstract}
 
 \maketitle

\tableofcontents
\newpage

\section{Introduction}
  The Gauss Image Problem, introduced in \cite{GIP}, is a natural extension to the classical Aleksandrov question of finding a body with the prescribed Aleksandrov's integral curvature \cite{Aleks1,Aleks0,Aleks}. This problem is a part of the study of Minkowski problems, a vital area of research in convex geometry. The study of these problems has led to the formulation of the log-Brunn-Minkowski conjecture \cite{L^p BM,log,KolesnikovMilman,Saroglou,Ramon} and to the sharp affine $L^p$ Sobolev inequality \cite{Sobolev}. The latter has also inspired many other sharp affine isoperimetric inequalities \cite{Haberl,LYZ00jdg,Sobolev}. Readers are referred to Chapters 8 and 9 of Schneider's textbook \cite{S14} for an introduction to Minkowski problems and to the articles \cite{BHP17jdg,BLYZ13jams,CW06adv,Lp Aleks,HLYZ16,HZ18adv,LutwakLp,LO95jdg,LYZ00jdg,LYZ04tams,LYZ06imrn,LYZ16,Ol2,Ol21,Oliker,Sta1,YZCVPDE,YZJDG,Zhao,Zu,Zu2} for an overview of the recent developments. Additionally, we acknowledge the works \cite{Cheng,Caffarelli,Nirenberg,Pogorelov,Trudinger} related to the regularity of Minkowski problems.
 
Given two measures $\mu$ and $\lambda$ on $\sn$, the Gauss Image Problem asks about the existence of a convex body $K$, containing the origin in its interior, such that $\mu=\lambda(K,\cdot)$, where by $\lambda(K,\cdot)$ we denote the pullback of $\lambda$ under the radial Gauss Image map of $K$: a composition of the multivalued Gauss map of $K$ and the radial map of $K$. More formally, given a Borel set $\omega\subset \sn$ we define:
\begin{equation}
	\lambda(K,\omega):=\lambda\left(\bigcup_{u\in\omega}N(K,\rho_K(u)u)\right),
\end{equation}
where $N(K,v)$ is the normal cone of a boundary point $v\in \text{bd}(K)$ and $\rho_K(\cdot)$ is the radial function of $K$.

 Many significant measures can be described as pullbacks of a certain $\lambda$ under the Gauss Image map. For instance, when $\lambda$ is the spherical Lebesgue measure, $\lambda(K,\cdot)$ is known as Aleksandrov's integral curvature of the body $K$ \cite{Aleks}. When $\lambda$ is Federer's $(n-1)^\text{th}$ curvature measure, $\lambda(K,\cdot)$ is the surface area measure of Aleksandrov-Fenchel-Jessen \cite{Aleks0}. Finally, the more recently defined dual curvature measure is also a pullback of a certain $\lambda$ under the Gauss Image map  \cite{HLYZ16}. All of these examples motivate the necessity for a systematic study of how measures transfer to each other through the radial Gauss Image Map, that is, the Gauss Image Problem:
 
 \smallskip
 
 \noindent{\textbf{The Gauss Image Problem}} (Raised in \cite{GIP}) \textit{Suppose $\lambda$ is a measure defined on the Lebesgue measurable subsets of $\sn$, and $\mu$ is a Borel measure on $\sn$. What are the necessary and sufficient conditions on $\lambda$ and $\mu$, so that there exists a convex body $K$ with the origin in its interior such that \begin{equation}
  \mu=\lambda(K,\cdot)?
\end{equation}
If such a convex body exists, to what extent is it unique?}  

 \smallskip
    
When $\lambda$ is a spherical Lebesgue measure, we recover the original Aleksandrov problem, which Aleksandrov first studied in\cite{Aleks1,Aleks,Aleks0}. Different proofs of the Aleksandrov problem were given by Oliker \cite{Oliker} and Bertrand \cite{Bertrand}. The $L_p$ analogs of the Aleksandrov problem were considered by Huang, Lutwak, Yang, and Zhang in \cite{Lp Aleks}, by Mui in \cite{Stephanie}, and by Zhao in \cite{Zhao}. 

When one of the measures is assumed to be absolutely continuous, the Gauss Image Problem was studied in \cite{GIP} by B\"or\"oczky, Lutwak, Yang, Zhang, and Zhao. There, the Aleksandrov relation was introduced to attack the problem: 
 
 \begin{defi}
 	Two Borel measures $\mu$ and $\lambda$ on $\sn$ are called Aleksandrov related if 
 	\begin{equation}
  \lambda(\sn)=\mu(\sn)>\mu(\omega)+\lambda(\omega^*)
\end{equation}
for each compact, spherically convex set $\omega\subset\sn$, where the set $\omega^*\subset\sn$ is defined as the spherical polar set to $\omega$: 
\begin{equation}
\omega^*:=\bigcap_{u\in\omega}\{v\in\sn : u\cdot v\leq 0\}	.
\end{equation}
\end{defi}
Equivalently, one can define two Borel measures $\mu$ and $\lambda$ on $\sn$ to be Aleksandrov related if $\mu(\sn)=\lambda(\sn)$ and for each compact, spherically convex set $\omega\subset\sn$, \begin{equation}
		\mu(\omega)<\lambda(\omega_{\frac\pi2}),
	\end{equation}
	where 
	\begin{equation}	
		\omega_{\frac\pi2}:=
 \bigcup_{u\in \omega} \{v\in \sn : u\cdot v > 0 \}.
	\end{equation}

 With this new condition, the following solution to the Gauss Image Problem was obtained:
\begin{thm}[K. J. B\"or\"oczky, E. Lutwak, D. Yang, G. Zhang and Y. Zhao \cite{GIP}]\label{GIP classical}
Suppose $\mu$ and $\lambda$ are Borel measures on $\sn$, and $\lambda$ is absolutely continuous. If $\mu$ and $\lambda$ are Aleksandrov related, then there exists a convex body $K$ containing the origin in its interior, such that $\mu=\lambda(K,\cdot)$.
\end{thm}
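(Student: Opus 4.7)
The plan is to adopt the standard variational strategy for Minkowski-type problems: solve the problem first for finitely supported data, then recover the general case by approximation. First, I would weakly approximate $\mu$ by a sequence of discrete measures $\mu_k = \sum_{i=1}^{N_k} c_{i,k}\,\delta_{u_{i,k}}$ rescaled so that $\mu_k(\sn) = \lambda(\sn)$. By shrinking the supports slightly if needed, I would arrange (using outer regularity of $\mu$ and continuity of $\omega \mapsto \lambda(\omega^*)$ under monotone limits of spherically convex sets, which requires the absolute continuity of $\lambda$) that each $\mu_k$ remains Aleksandrov related to $\lambda$.

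For fixed $k$, write $\mu_k = \sum_{i=1}^N c_i \delta_{u_i}$ and parametrize Wulff-shape polytopes by $K(h) = \bigcap_{i=1}^N \{x : x \cdot u_i \le h_i\}$ for $h \in \mathbb{R}^N_{>0}$. Consider the variational problem
\[
\text{maximize} \quad \Phi(h) = \sum_{i=1}^N c_i \log h_i \quad \text{subject to} \quad \int_{\sn} \log \rho_{K(h)}(v) \, d\lambda(v) = 0.
\]
Since $\partial_{h_i} \rho_{K(h)}(v)$ is supported on the radial shadow of the face with outer normal $u_i$, a direct Lagrange multiplier computation shows that any interior maximizer $K_k = K(h^*)$ satisfies $c_i = \lambda(K_k, \{u_i\})$ for all $i$; summing over $i$ forces the Lagrange multiplier to equal one (using $\mu_k(\sn) = \lambda(\sn)$), so $\mu_k = \lambda(K_k, \cdot)$.

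The main obstacle is showing that $\Phi$ attains its supremum at an interior configuration, and this is precisely where the Aleksandrov relation is used in full strength. Degeneration along a maximizing sequence can manifest as (a) some $h_i$ blowing up and eliminating the corresponding faces, (b) some $h_i$ collapsing to zero and pushing the origin to $\partial K(h)$, or (c) a coordinated blow-up of the entire polytope under the constraint. In each case the set of degenerate normal directions is contained in a compact spherically convex $\omega \subset \sn$ whose polar $\omega^*$ captures exactly the directions on which $\rho_{K(h)}$ blows up. Tracking the asymptotics of $\Phi$ against the constraint reduces its degenerate limiting value to an upper bound of the shape $\mu_k(\omega) + \lambda(\omega^*) - \lambda(\sn)$, which is strictly negative by the Aleksandrov relation. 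Matching the degeneration pattern on $\omega$ with the collapse of $\rho$ on $\omega^*$ via a careful covering argument on the sphere is the delicate geometric step; once accomplished, $\Phi$ is bounded away from its supremum along degenerating configurations, and the maximum is attained with uniform upper and lower bounds on $h^*$.

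Finally, from the non-degeneracy bounds I extract a subsequence $K_k \to K$ in Hausdorff distance with $K \in \kno$. Absolute continuity of $\lambda$ ensures that the set of outer normals at non-regular boundary points of $K$ has $\lambda$-measure zero, which yields weak continuity of the map $K' \mapsto \lambda(K', \cdot)$ at $K$. Combining this with $\mu_k \to \mu$ weakly gives $\mu = \lambda(K, \cdot)$ and completes the proof.
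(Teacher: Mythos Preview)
This theorem is quoted from \cite{GIP} and is not proved in the present paper; the paper only \emph{describes} the structure of the original proof (in the paragraph following Theorem~\ref{main}). That description is: maximize the unconstrained functional $\Phi(K,\mu,\lambda)=\int\log\rho_K\,d\mu+\int\log\rho_{K^*}\,d\lambda$ directly over all $K\in\kno$ for the given (non-discretized) $\mu$; use the Aleksandrov relation to show that any maximizing sequence has $r_K/R_K$ bounded below; extract a limit by Blaschke selection; and conclude via the variational characterization (Theorem~8.2 of \cite{GIP}) that any maximizer solves $\mu=\lambda(K,\cdot)$.

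Your route is genuinely different in two ways. First, you discretize $\mu$, solve a polytope problem for each $\mu_k$, and pass to a limit; the \cite{GIP} argument never discretizes and runs the compactness argument once, for the original $\mu$. Second, you use a constrained Lagrange-multiplier problem on Wulff shapes rather than the unconstrained functional $\Phi$. Here your parametrization has the polarity reversed: with $K(h)=\bigcap_i\{x\cdot u_i\le h_i\}$ the quantity $\partial_{h_i}\rho_{K(h)}(v)$ is supported on the \emph{reverse} radial Gauss image of $u_i$, so the Euler--Lagrange condition yields $\mu_k=\lambda(K(h)^*,\cdot)$, not $\lambda(K(h),\cdot)$. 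The correct polytopes have \emph{vertices} in the directions $u_i$ (the class $\po_\mu$ in this paper), not facets with those normals.

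Beyond this fixable polarity slip, your limiting step has a real gap. Even if each $\mu_k$ is Aleksandrov related to $\lambda$ and each $K_k$ is non-degenerate, the lower bound on $r_{K_k}/R_{K_k}$ produced by your degeneration analysis depends on the \emph{margin} in the strict Aleksandrov inequality for $\mu_k$, and you have not argued that this margin is uniform in $k$. Without a uniform bound you cannot guarantee the Hausdorff limit lies in $\kno$. This is exactly the kind of difficulty the present paper highlights (see Lemma~\ref{weak Aleks bound} and the remark after Conjecture~\ref{GIP conj}): radius bounds obtained at the discrete level can depend badly on the discretization. The \cite{GIP} proof avoids the issue entirely by working directly with $\mu$.
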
 
Moreover, if the absolutely continuous measure $\lambda$ is strictly positive on open sets, it was shown that the Aleksandrov relation is a necessary assumption for the existence of a solution to the Gauss Image Problem. In this case, a solution to the Gauss Image Problem was shown to be unique up to a dilation. We refer the reader to \cite{GIP} for this result and an introduction to the Gauss Image Problem. Additionally, let us also mention Theorem 1.7 and Remark 4.9 in Bertrand \cite{Bertrand}, which also imply Theorem \ref{GIP classical} using a very different method. 

While the Aleksandrov relation is a natural assumption when one of the measures is assumed to be positive on open sets, it turns out that there are numerous examples of measures $\mu$ and $\lambda$ satisfying $\mu=\lambda(K,\cdot)$ that are not Aleksandrov related. For instance, let $K=B^{n}$, a unit ball centered at the origin, and $\mu$ and $\lambda$ to be any even, absolutely continuous, identical measures supported on small symmetric spherical caps $\omega$ and $-\omega$, where $\omega\subset\sn$ is a cap around the north pole and $-\omega\subset\sn$ is a cap around the south pole. Then, $\mu=\lambda=\lambda(K,\cdot)$, and $\mu(\omega)+\lambda(\omega^*)=\lambda(\sn)$, which violates the Aleksandrov relation. Moreover, starting with the body $K=B^{n}$, we can perturb it along the equator while preserving the convexity. We thereby obtain a family of convex bodies such that every member still solves the Gauss Image Problem for fixed measures $\mu$ and $\lambda$. This observation indicates that, in general, the solution to the Gauss Image Problem may be highly non-unique.

Based on these considerations, we introduce a relaxation of the Aleksandrov relation for the Gauss Image Problem. This relaxation turns out to be a necessary assumption for the two measures to be related by a convex body. That is, for the existence of a convex body $K$ with origin in its interior such that $\mu=\lambda(K,\cdot)$. See Proposition \ref{necessity}.

\begin{defi}
Given Borel measures $\mu$ and $\lambda$ on $\sn$, we say that $\mu$ is weakly Aleksandrov related to $\lambda$ if $\mu(\sn)=\lambda(\sn)$ and for each closed set $\omega\subset\sn$, contained in a closed hemisphere, there exists $\alpha\in(0,\frac \pi 2)$ such that
	\begin{equation}
		\mu(\omega)\leq\lambda(\omega_{\frac\pi2-\alpha}),
	\end{equation}
	where 
	\begin{equation}	
		\omega_{\frac\pi2-\alpha}:=
 \bigcup_{u\in \omega} \{v\in \sn : u\cdot v > \cos(\frac\pi 2 -\alpha) \}.
	\end{equation}
\end{defi}

Besides showing that the weak Aleksandrov relation is a necessary assumption for the existence of a solution to the Gauss Image Problem, we also show that the classical Aleksandrov relation implies the weak Aleksandrov relation. The nature of the constant $\alpha$ is addressed in Section \ref{Weak section}. In particular, if $\mu = \lambda(K,\cdot)$, then the constant $\alpha$ in the weak Aleksandrov condition is closely related to the inner to outer radius ratio of the body $K$. See Proposition \ref{necessity} and the discussion after it. 

Now, with an appropriate necessary condition, we are ready to state the main result of the paper. In the following, a measure $\mu$ is a discrete measure if it can be expressed as
\begin{equation}
	\mu=\sum\limits_{i=1}^m\mu_i\delta_{v_i}
\end{equation}
where $\mu_i$ are some positive coefficients, and $\delta_{v_i}$ is a Dirac measure of the set $\{v_i\}$. For the measure $\mu$, we also define the set $P_\mu$ of polytopes as \begin{equation}\label{convex hull notation}
	\po_\mu =\{\text{conv}\{\beta_iv_i\mid 1 \leq i \leq m\}\mid (\beta_1,\ldots,\beta_m)\in \mathbb R^m_{>0}\}.
\end{equation}

\begin{thm}\label{main}
Suppose $\mu$ and $\lambda$ are Borel measures on $\sn$, so that $\mu$ is discrete and not concentrated on a closed hemisphere, and $\lambda$ is absolutely continious. If $\mu$ is weakly Aleksandrov related to $\lambda$, then there exists a polytope $P\in \po_\mu$ such that $\mu=\lambda(P,\cdot)$. 
\end{thm}

In particular, we establish that given a discrete measure $\mu$ and an absolutely continuous measure $\lambda$, the weak Aleksandrov relation is a necessary and sufficient condition for the existence of a solution to the Gauss Image Problem.

In conclusion, we would like to comment on the differences between the methods introduced in this paper and those presented in \cite{GIP}. The proof of Theorem \ref{GIP classical} in \cite{GIP} has the following structure: First, it is shown that any convex body that maximizes the specific functional on convex bodies (defined below, see (\ref{specific functional})) is a solution to the Gauss Image Problem. Then, by analyzing the classical Aleksandrov relation for specific measures, it is proven that any sequence of convex bodies maximizing this functional exhibits a bound on the inner to outer radius ratio of its elements. This bound is arguably the most challenging aspect of the paper \cite{GIP}. From the Blaschke selection theorem, the authors then deduce that this sequence contains a convergent subsequence that converges to a non-degenerate convex body $K$ maximizing the functional (\ref{specific functional}). The limiting body $K$, in turn, solves the Gauss Image Problem.

The main challenge and difference in the proof of Theorem \ref{main}, as compared to the main result of \cite{GIP}, is that the weak Aleksandrov relation does not impose a bound on the inner to outer radius ratio for the possible solution, unlike its stronger counterpart. Going back to the previously mentioned example of spherical caps, for any scalars $\lambda_1,\lambda_2>0$, define $K_{\lambda_1,\lambda_2}$ to be the convex hull in $\mathbb R^n$ of  $\lambda_1\omega\subset\lambda_1\sn$ and $-\lambda_2\omega\subset\lambda_2\sn$. Note that any $K_{\lambda_1,\lambda_2}$ is a solution to $\mu=\lambda(K,\cdot)$, where $\mu$ and $\lambda$ are defined as before. (This is true because the normal cones of $K_{\lambda_1,\lambda_2}$  do not change for radial directions contained in the support of $\mu$, when we vary $\lambda_1$ and $\lambda_2$. See Section \ref{section preliminaries} for the definitions.) Hence, in contrast to the classical Aleksandrov relation assumption, the solution body may contain parts that one can dilate independently. Consequently, unlike the case when the classical Aleksandrov relation is assumed, a sequence of convex bodies, say $(K_\fn)_{\fn=1}^\infty$ such that $K_\fn\subset rB^n$ for all $\fn$ and some $r$, may maximize the functional while converging to a degenerate convex body. This makes the proof of the main theorem in this paper vastly differ from that in \cite{GIP}, as not every sequence of normalized convex bodies maximizing the functional is suitable for the proof. To construct this sequence and to overcome these challenges, we invoke a new process that we call the partial rescaling of convex bodies. See \eqref{convex hull form of pt}.

It would be very interesting to see whether one could prove the result of the Theorem \ref{GIP classical}, the main results of \cite{GIP}, under the weak Aleksandrov relation assumption instead of the classical Aleksandrov relation. Our paper can be viewed as a step towards this direction. We state this in the Conjecture \ref{GIP conj}.  

\vspace{4mm}
\textbf{Acknowledgements} The author is extremely grateful to the editor and the referee for their valuable and extensive comments, which significantly improved readability of the paper.

\section{Preliminaries}\label{section preliminaries}
By $\kn$ we denote the set of convex bodies (compact, convex subsets with nonempty interior in $\rn$). By $\kno\subset\kn$ we denote those convex bodies that contain the origin in their interiors. Given $K\in\kno$, let $x\in\partial K$ be a boundary point. \textit{The normal cone} at $x$ is defined by 
\begin{equation}
  N(K,x)=\{v\in\sn : (y-x)\cdot v\leq 0 \text{ for all } y\in K \},
\end{equation}
which parametrizes all unit normals at a given boundary point. For $K\in\kno$,
 \textit{the radial map} $r_K:\sn\rightarrow\partial K$ of $K$ is defined for $u\in \sn$ by $r_K(u)=ru\in\partial K$, where $r>0$. Given a subset $\omega$ of $\sn$, the \textit{radial Gauss Image} of $\omega$ is defined as follows:
\begin{equation}
\balpha_K(\omega)=\bigcup_{x\in r_K(\omega)}N(K,x)\subset \sn.	
\end{equation}
The radial Gauss Image map, $\balpha_K$, maps sets of $\sn$ to sets of $\sn$. Outside of a spherical set of Lebesgue measure zero, the multivalued map $\balpha_K$ is singular valued. It is known that $\balpha_K$ maps Borel measurable sets to Lebesgue measurable sets. See p.88--89 in \cite{S14} for both of these results.  We denote the restriction of $\balpha_K$ to the corresponding singular valued map by $\alpha_K$. For additional details, we refer the reader to \cite{GIP}. 

\textit{The radial function} $\rho_K:\sn\rightarrow \mathbb R$ is defined by: \begin{equation}
  \rho_K(u)=\max\{a : au\in K\}.
\end{equation}
In this case, $r_K(u)=\rho_K(u)u$. \textit{The support function} of the body $K$ is defined by: \begin{equation}
  h_K(x)=\max\{x\cdot y : y\in K\}.
\end{equation}
 For $K\in\kno$, we define its \textit{polar body} $K^*\in\kno$ as the convex body with support function given by $h_{K^*}:=\frac 1 {\rho_K}$. 
 
 We denote by $\mathfrak r_K$ the radius of the largest ball contained in $K$ and centered at $o$. Similarly, we denote $\mathfrak R_K$ to be the radius of the smallest ball containing $K$ and centered at $o$. We will refer to $\mathfrak r_K$ as \textit{the inner radius} of the body $K$, to $\mathfrak R_K$ as \textit{the outer radius} of $K$, and to the ratio $\frac {\mathfrak r_K} {\mathfrak R_k}$ as \textit{the inner to outer radius ratio} of the body $K$.
 
 It is important to note that for any $K\in\kno$, the following identity holds: \begin{equation}\label{some geometric obvious relation}
 	\min{\rho_K}=\min{h_K}=\mathfrak r_K\leq \mathfrak R_k=\max{\rho_K}=\max{h_K}. 
 \end{equation} \textit{The support hyperplane} to $K$ with an outer unit normal $v\in\sn$ is defined as \begin{equation}
 H_K(v)=\{x:x\cdot v=h_K(v)\}.	
 \end{equation}
By $H^-(\alpha,v)$ we denote the halfspace $\{x:x\cdot v\leq\alpha\}$ and by $H(\alpha,v)$ we denote the hyperplane $\{x:x\cdot v=\alpha\}$. Given a set $S\subset  \rn$ we write its convex hull as
\begin{equation}
	\text{conv}(S).
\end{equation}

For a set $\omega \subset S^{n-1}$, we define $\text{cone}\,\omega $ as
the {\it cone that $\omega $ generates} in $\rn$, that is
\begin{equation}
  \text{cone}\,\omega = \{tu:\text{$t\ge 0$ and $u\in\omega$}\}.
\end{equation}
We say that $\omega \subset S^{n-1}$ is {\it spherically convex} if the
cone that $\omega $ generates is a nonempty, proper, convex subset of $\rn$.
Therefore,
 a spherically convex set in $\sn$ is always nonempty and
 contained in a closed hemisphere of $\sn$. Given $\omega\subset S^{n-1}$ contained in a closed hemisphere,
 {\it the polar set} $\omega^*$ is defined by:
\begin{equation}\label{polarset}
\begin{split}	
\omega^* &=
\bigcap_{u\in\omega}\{v\in S^{n-1} : u\cdot v\leq 0 \}. 
\end{split}
\end{equation}
We note that the polar set is always spherically convex. If $\omega\subset\sn$ is a closed set, we define its {\it outer parallel set} $\omega_\alpha$ for some $\alpha\in (0,\frac \pi 2]$ to be
\begin{equation}
	\omega_\alpha =
 \bigcup_{u\in \omega} \{v\in \sn : u\cdot v > \cos\alpha \}.
\end{equation}
For notational convenience, if $\omega$ contains a single vector, say, $\omega=\{v\}$ where $v\in\sn$, we are going to simply write $v_\alpha$ instead of $\{v\}_{\alpha}$.

As mentioned previously, $\balpha_K$ maps Borel measurable sets to Lebesgue measurable sets. Given a Borel measure $\lambda$ we, as in \cite{GIP}, define \textit{the Gauss Image measure of }$\lambda$ \textit{via} $K$ as
\begin{equation}
	\lambda(K,\omega):=\lambda(\balpha_K(\omega))
\end{equation}
for each Borel $\omega\in\sn$. Note, however, that the naming is a bit misleading as, in general, $\lambda(K,\cdot)$ does not necessarily have to be a measure. For example, in the dimension 2, let $K$ be a square centered at the origin with sides perpendicular to unit vectors $u_1,u_2,u_3,u_4$. Let $\lambda=\sum\limits_{i=1}^4\delta_{u_i}$ where $\delta_{u_i}$ are Dirac measures of sets $\{u_i\}$. Let unit vectors $v_1$ and $v_2$ be such that $r_K(v_1),r_K(v_2)$ are in the interior of the side of $K$ perpendicular to $u_1$. Then, \begin{equation}
	\balpha_K(\{v_1\})=\balpha_K(\{v_2\})=\balpha_K(\{v_1,v_2\})=\{u_1\}.
\end{equation}
Implying that:
\begin{equation}
	1=\lambda(K,\{v_1\})=\lambda(K,\{v_2\})=\lambda(K,\{v_1,v_2\}),
\end{equation}
which establishes that $\lambda(K,\cdot)$ is not countably additive.

On the other hand, if $\lambda$ is an absolutely continuous Borel measure, which is the case of this work, $\lambda(K,\cdot)$ is always a measure. For this and related results, see \cite{GIP}. We also point out Lemma 3.3 in \cite{GIP}, which states that:

\begin{lem}
If $\lambda$ is an absolutely continuous Borel measure, and $K\in\kno$, then \begin{equation}
 	\int_{\sn}f(u)d\lambda(K,\cdot)=\int_{\sn}f(\alpha_K(v))d\lambda(v)
 \end{equation}	
 for each bounded Borel measurable function $f:\sn\rightarrow \mathbb R$.
\end{lem}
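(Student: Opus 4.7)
The argument follows the standard measure-theoretic recipe: verify the identity on indicator functions, extend by linearity to simple functions, and pass to bounded Borel $f$ by a monotone (or dominated) convergence argument. Concretely, it suffices to establish that for every Borel $\omega\subset\sn$,
\begin{equation*}
\lambda(K,\omega)=\lambda\bigl(\{v\in\sn:\alpha_K(v)\in\omega\}\bigr),\qquad(\star)
\end{equation*}
because once $(\star)$ is in hand, linearity handles finite linear combinations of indicators, the monotone convergence theorem upgrades to all nonnegative bounded Borel $f$, and writing $f=f^{+}-f^{-}$ gives the general case.

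To establish $(\star)$, I would unpack both sides using the definitions stated in the Preliminaries: the left-hand side is $\lambda(\balpha_K(\omega))$ by the definition of the Gauss image measure, while the right-hand side is $\lambda(\alpha_K^{-1}(\omega))$. The structural input I would invoke is that the multi-valued radial Gauss map $\balpha_K$ and its multi-valued inverse are both singular-valued outside spherically Lebesgue-null subsets of $\sn$. For a point $v\in\sn$ outside this exceptional null set, there is a unique radial direction mapped to $v$ by $\balpha_K$, and that direction is precisely $\alpha_K(v)$; hence $v\in\balpha_K(\omega)$ if and only if $\alpha_K(v)\in\omega$. Thus the sets $\balpha_K(\omega)$ and $\alpha_K^{-1}(\omega)$ coincide modulo a spherically Lebesgue-null set, and absolute continuity of $\lambda$ promotes this to $\lambda(\balpha_K(\omega))=\lambda(\alpha_K^{-1}(\omega))$, which is exactly $(\star)$.

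The main obstacle is the structural fact used in the previous step: that the exceptional set on which the multi-valued map fails to be singular-valued (equivalently, the set of normal directions at singular boundary points of $K$) has spherical Lebesgue measure zero. I would take this from Schneider's treatise \cite{S14} rather than reproving it; given this black box, the remainder of the argument is an elementary combination of the definition of $\lambda(K,\cdot)$, absolute continuity, and the standard density-of-simple-functions argument.
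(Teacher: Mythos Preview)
The paper does not supply its own proof of this lemma; it merely quotes it as Lemma~3.3 of \cite{GIP} and refers the reader there. So there is nothing in the present paper to compare against, and your sketch is exactly the standard pushforward argument one would expect (and essentially what \cite{GIP} does): reduce to indicators, identify $\balpha_K(\omega)$ with $\alpha_K^{-1}(\omega)$ up to a spherically Lebesgue-null set using the structural fact from \cite{S14}, invoke absolute continuity of $\lambda$, and then pass to simple and bounded Borel functions.

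One small point worth cleaning up: in your key step you describe $\alpha_K(v)$ as ``the unique radial direction mapped to $v$ by $\balpha_K$,'' i.e.\ you are treating $\alpha_K$ as the single-valued \emph{reverse} radial Gauss image. The Preliminaries, read literally, define $\alpha_K$ as the single-valued restriction of $\balpha_K$ itself, not of its inverse. The identity in the lemma is only correct under your reading (and indeed in \cite{GIP} the map appearing on the right is the reverse radial Gauss image), so the discrepancy is a notational wrinkle in the paper rather than an error in your reasoning; still, it would be worth stating explicitly which map you mean so that your equivalence $v\in\balpha_K(\omega)\Leftrightarrow\alpha_K(v)\in\omega$ is transparently correct.
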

We note that if for a pair of a given $\mu$ and $\lambda$, there exists $K\in\kno$ such that $\mu=\lambda(K,\cdot)$, then we say that the measures $\mu$ and $\lambda$ are {\it related by the convex body} $K$. For $K\in\kno$ and $\lambda$ absolutely continuous, we define the functional $\Phi(K,\mu,\lambda)$ by 
\begin{equation}\label{specific functional}
	\Phi(K,\mu,\lambda):=\int_{\sn}\log\rho_Kd\mu+\int_{\sn}\log\rho_{K^*}d\lambda.
\end{equation}
Sometimes, we will write $\Phi(K)$, suppressing $\mu$ and $\lambda$. Note that $\Phi(K,\mu,\lambda)$ corresponds to $\Phi_{\mu,\lambda}(K^*)$ in the notation of \cite{GIP}. This functional is intimately associated with the Gauss Image Problem. For example, Theorem 8.2 in \cite{GIP} shows that if $\mu$ is a Borel measure and $\lambda$ is an absolutely continuous Borel measure such that\begin{equation}\label{maximizer its the solution}
	\Phi(K,\mu,\lambda)=\sup_{K'\in \kno} \Phi(K',\mu,\lambda)
\end{equation} for $K\in\kno$, then $\mu=\lambda(K,\cdot)$. It is important to stress that:
\begin{equation}\label{scaling motivation}
\begin{split}
	&\text{If } \mu=\lambda(K,\cdot) \text{, then } \mu=\lambda(cK,\cdot) \text{ for any }c>0.\\
	&\Phi(K,\mu,\lambda)=\Phi(cK,\mu,\lambda) \text{ for any } c>0.
\end{split}
\end{equation}
That is, the nature of the problem is not sensitive to the rescaling of the convex bodies. 

The Aleksandrov relation, as well as the weak Aleksandrov relation, were defined in the Introduction. We simply note the interchangeable use of the terms "Aleksandrov condition" and "Aleksandrov relation". 
 
A measure $\mu$ is called discrete if it takes the form: 
\begin{equation}\label{form of measure}
	\mu=\sum\limits_{i=1}^m\mu_i\delta_{v_i}
\end{equation} 
where $\delta_{v_i}$ are Dirac measures of sets $\{v_i\}$ containing a single vector $v_i\in\sn$ and $\mu_i$ are strictly positive coefficients. Aside from Proposition \ref{where mu is not discrete}, the measure $\mu$ will always be assumed to be discrete and written as in \eqref{form of measure} with letters $v$ and $m$ reserved specifically for $\mu$. 

Given a discrete measure $\mu$ not concentrated on a closed hemisphere, we define $\po_\mu$ to be the set of the convex hull of points $\{\beta_iv_i\}$ with $\beta_i>0$. See \eqref{convex hull notation}  . Given any $P\in \po_\mu$, since $\mu$ is not concentrated on a closed hemisphere, $P$ contains the origin in its interior. Therefore, $\po_\mu\subset \kno$. Moreover, any $P\in\po_\mu$ is a polytope, such that each vertex of $P$ is located in a radial direction $v_i$ for some $i\in\{1,\ldots,m\}$. Note, however, that sometimes a polytope $P\in\po_\mu$ might have fewer than $m$ vertices corresponding to some $\beta_jv_j$ contained inside the convex hull of the remaining points. 

The next part of notations can be viewed as a discrete analog to the standard concepts of the support function and the Wulff shape. Given $P\in \po_\mu$, we define its representation to be an $m$-tuple of positive numbers \begin{equation}\label{definition of representation}
	\alpha=(\alpha_1,\ldots,\alpha_m):=(h_{P^*}(v_1),\ldots,h_{P^*}(v_m)).
\end{equation} 
Note that if $\alpha$ is the representation of $P$ then 
\begin{equation}\begin{split}\label{Form equation}
	P&=\text{conv}\{ \frac {v_i} {\alpha_i} \mid 1\leq i \leq m \}, \\
 	P^*&=\bigcap_{i=1}^mH^-(\alpha_i,v_i).
\end{split}
\end{equation}

Conversely, suppose we start with some $m-$tuple of positive numbers, $\gamma$. We define $P_\gamma\in \po_\mu$ to be the following:
\begin{equation}
  P_\gamma=(\bigcap_{i=1}^mH^-(\gamma_i,v_i))^*. 
\end{equation}
We call such $P$ a \textit{dual Wulff Shape} of the $m-$tuple $\gamma$. We refer to the representation of $P_\gamma$ as the \textit{Wulff tuple} of $\gamma$. In particular, one has that if $\alpha$ is a Wulff tuple of $\gamma$, then 
\begin{equation}\label{feb13}
	\alpha_i\leq \gamma_i.
\end{equation}
Moreover, if $\alpha_i<\gamma_i$, then the facet of $P^*_\gamma$ in the direction $v_i$ is degenerate. 

If a polytope has an index $a$, such as $P_a$, we are going to write its coefficients in representation as: 
\begin{equation}
\begin{split}
	\alpha_a&=(\alpha_{a,1},\ldots,\alpha_{a,m}) 
\end{split}
\end{equation}

Given $P\in\po_\mu$ with its representation denoted by the $m$-tuple $\alpha$ and a nonempty and not full index set $I\subset \{1,\ldots, m\}$, we will denote by $U(I),L(I),U^*(I),L^*(I)$ the following quantities: \begin{equation}\label{notation UL 2}\begin{split}
  U(I):=&\max_{i\in I}\alpha_i, \\ L(I):=&\min_{i\in I}\alpha_i,\\ U^*(I):=&\max_{i\notin I}\alpha_i,\\ L^*(I):=&\min_{i\notin I}\alpha_i.
  \end{split}
\end{equation}
It will usually be the case that:
\begin{equation}
	0<L^*(I)\leq U^*(I)\leq L(I)\leq U(I)=1.
\end{equation} 
Outside Proposition \ref{maximizer} and Proposition \ref{weak Aleks bound}, the index set $I$ remains the same through each proposition. In this case we simply write \begin{equation}U,L,U^*,L^*\end{equation}  to denote the same quantities. 
If a polytope has an index $t$, such as $P_t$, we are going to write \begin{equation}
	P_t, \alpha_t, L_t,U_t,L_t^*,U_t^*.
\end{equation} to denote the same quantities for $P_t$.

We use the books of Schneider \cite{S14} as our standard reference. The books of Gruber and Gardner are also good alternatives \cite{G06book,Gruberbook}.

\section{Weak Aleksandrov Condition}\label{Weak section}
Let us start by showing that the weak Aleksandrov relation is a necessary condition for Borel measures to be related by a convex body.

\begin{prop}\label{necessity}
	Given $K\in\kno$, suppose $\lambda$ and $\lambda(K,\cdot)$ are Borel measures. Then, $\lambda$ is weakly Aleksandrov related to $\lambda(K,\cdot)$. 
\end{prop}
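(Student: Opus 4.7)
The plan is to show a strong set-theoretic containment $\balpha_K(\omega)\subset \omega_{\pi/2-\alpha}$ for a sufficiently small $\alpha\in(0,1)$, and then obtain the weak Aleksandrov inequality by monotonicity of $\lambda$. The equality of total masses $\mu(\sn)=\lambda(\sn)$ is immediate from $\balpha_K(\sn)=\sn$: every direction is realized as an outer normal at some boundary point of $K$, so $\lambda(K,\sn)=\lambda(\balpha_K(\sn))=\lambda(\sn)$.

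The central geometric estimate I will prove is the following: for every $u\in\sn$ and every $v\in N(K,r_K(u))$,
\begin{equation*}
 u\cdot v \;\ge\; \frac{r_K}{R_K}>0.
\end{equation*}
Indeed, since $K$ contains a ball of radius $r_K$ around the origin, for any $w\in\sn$ we have $r_K w\in K$, so the defining inequality of the normal cone gives $(r_Kw-\rho_K(u)u)\cdot v\le 0$. Choosing $w=v$ yields $r_K\le \rho_K(u)\,(u\cdot v)\le R_K(u\cdot v)$, which is the desired bound. This quantity is strictly positive precisely because $K\in\kno$, so both $r_K$ and $R_K$ are finite and nonzero.

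Now fix a closed set $\omega\subset\sn$ contained in a closed hemisphere. Choose $\alpha\in(0,1)$ small enough that $\sin\alpha<r_K/R_K$; such an $\alpha$ exists since $r_K/R_K>0$, independently of $\omega$. For any $u\in\omega$ and any $v\in\balpha_K(\{u\})=N(K,r_K(u))$, the estimate above gives
\begin{equation*}
 u\cdot v \;\ge\; \frac{r_K}{R_K}\;>\;\sin\alpha\;=\;\cos\!\Bigl(\tfrac{\pi}{2}-\alpha\Bigr),
\end{equation*}
so $v\in\omega_{\pi/2-\alpha}$. Therefore $\balpha_K(\omega)\subset\omega_{\pi/2-\alpha}$, and by monotonicity
\begin{equation*}
 \mu(\omega)=\lambda(K,\omega)=\lambda(\balpha_K(\omega))\le\lambda(\omega_{\pi/2-\alpha}),
\end{equation*}
which is the weak Aleksandrov inequality.

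There is no real obstacle here; the only points that require care are the strictness of the inequality $u\cdot v>\cos(\pi/2-\alpha)$ (needed because $\omega_{\pi/2-\alpha}$ is defined by a strict inequality), which is handled by choosing $\alpha$ strictly below $\arcsin(r_K/R_K)$, and the measurability of $\balpha_K(\omega)$, which is part of the hypothesis that $\lambda(K,\cdot)$ is a Borel measure.
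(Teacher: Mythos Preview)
Your proof is correct and follows essentially the same route as the paper: both derive the key estimate $u\cdot v\ge r_K/R_K$ for $v\in\balpha_K(u)$ (the paper via $r_K\le h_K(v)=\rho_K(u)\,u\cdot v\le R_K\,u\cdot v$, you via the normal-cone inequality with $w=v$), and then choose $\alpha$ so that $\cos(\tfrac{\pi}{2}-\alpha)$ falls strictly below this bound to obtain $\balpha_K(\omega)\subset\omega_{\pi/2-\alpha}$. You additionally verify the total-mass equality $\lambda(K,\sn)=\lambda(\sn)$ and take explicit care of the strict inequality in the definition of $\omega_{\pi/2-\alpha}$, both of which the paper leaves implicit.
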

\begin{rmk}
	Note that if $\lambda$ is an absolutely continuous Borel measure, then $\lambda(K,\cdot)$ automatically becomes a Borel measure. For more details, see Section \ref{section preliminaries} and Lemma 3.3 in \cite{GIP}.
\end{rmk}
\begin{proof}
	Since $K\in\kno$, there exists $c>0$ such that $\frac{\mathfrak r_K}{\mathfrak R_K}>c$. Consider some $u\in\sn$ and $v\in\balpha_K(u)$. Then from \eqref{some geometric obvious relation},
	\begin{equation}
	\mathfrak r_K\leq h_K(v)= \rho_K(u)u\cdot v \leq \mathfrak R_Ku\cdot v. 	
	\end{equation}
Hence, $c<\frac{\mathfrak r_K}{\mathfrak R_K}\leq u \cdot v$. Therefore, for each $u\in\sn$, we have: \begin{equation}
  \balpha_K(u)\subset u_{ \arccos(c)}\subset u_{\frac \pi 2 - \alpha} 
  \end{equation}
  for some $\alpha$, where $0<\alpha<\frac \pi 2$. Therefore, for any closed set $\omega$ contained in a closed hemisphere, since $\omega_{\frac \pi 2 - \alpha}=\bigcup_{u\in \omega} u_{\frac \pi 2 - \alpha}$, we obtain:  \begin{equation}
  \lambda(K,\omega)=\lambda(\balpha_K(\omega))\leq\lambda(\omega_{\frac \pi 2 - \alpha}).
\end{equation}
\end{proof}
In particular, the above proof shows that given measures $\lambda$ and $\lambda(K,\cdot)$, the constant $\alpha$ in the weak Aleksandrov relation does not depend on the choice of a closed set $\omega$ contained in a closed hemisphere. Moreover, the constant $\alpha$ in the above proof encompasses the lower bound on $\frac {\mathfrak r_K} {\mathfrak R_K}$. That is, the bound on the inner to outer radius ratio of the body $K$. The following Proposition is the step in the opposite direction. Recall the notation for a discrete measure $\mu$ in \eqref{form of measure}.

\begin{prop}\label{uniform constant}
Suppose that a discrete Borel measure $\mu$ is weakly Aleksandrov related to a Borel measure $\lambda$. Then, there exists a uniform constant $\alpha\in (0,\frac \pi 2)$, such that for any a closed set contained in a closed hemisphere $\omega\subset\sn$: \begin{equation}
  \mu(\omega)\leq\lambda(\omega_{\frac\pi2-\alpha}).
\end{equation}
\end{prop}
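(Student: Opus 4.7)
The main idea is that the discreteness of $\mu$ reduces the problem to finitely many cases. Since $\mu$ is supported on the $m$ points $v_1,\ldots,v_m$, the quantity $\mu(\omega)$ depends only on $S_\omega := \omega \cap \{v_1,\ldots,v_m\}$, of which there are only $2^m$ possibilities. My plan is therefore to obtain a uniform $\alpha$ by taking a minimum over the $\alpha$'s attached (via the weak Aleksandrov hypothesis) to each such subset, and then use monotonicity of outer parallel sets to pass back to an arbitrary closed $\omega$.

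Precisely, for each nonempty subset $S \subseteq \{v_1,\ldots,v_m\}$ that is contained in some closed hemisphere, the set $S$ is itself closed (being finite) and contained in a closed hemisphere, so the weak Aleksandrov hypothesis applied to $S$ yields some $\alpha_S \in (0,1)$ with $\mu(S) \leq \lambda(S_{\frac\pi2-\alpha_S})$. I then set $\alpha := \min_S \alpha_S$, where the minimum ranges over at most $2^m$ subsets; this produces a single $\alpha \in (0,1)$.

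Given an arbitrary closed $\omega \subset \sn$ contained in a closed hemisphere, set $S_\omega := \omega \cap \{v_1,\ldots,v_m\}$. Then $\mu(\omega) = \mu(S_\omega)$, and $S_\omega$ sits in the same closed hemisphere as $\omega$. From the definition $\omega_\beta = \bigcup_{u\in\omega}\{v\in\sn : u\cdot v > \cos\beta\}$, the outer parallel set is monotone in both arguments: $\omega'\subseteq\omega$ implies $\omega'_\beta \subseteq \omega_\beta$, and $\beta \leq \beta'$ implies $\omega_\beta \subseteq \omega_{\beta'}$. Applying these with $\beta = \frac\pi2 - \alpha_{S_\omega}$ and $\beta' = \frac\pi2 - \alpha$ (noting $\alpha \leq \alpha_{S_\omega}$), I get $(S_\omega)_{\frac\pi2 - \alpha_{S_\omega}} \subseteq \omega_{\frac\pi2 - \alpha}$, and hence
\[
\mu(\omega) \;=\; \mu(S_\omega) \;\leq\; \lambda\bigl((S_\omega)_{\frac\pi2 - \alpha_{S_\omega}}\bigr) \;\leq\; \lambda\bigl(\omega_{\frac\pi2 - \alpha}\bigr).
\]

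There is essentially no real obstacle here: the entire content of the statement is that finite support of $\mu$ promotes the pointwise-in-$\omega$ weak Aleksandrov hypothesis to a uniform one, by reducing to the finitely many subsets of $\{v_1,\ldots,v_m\}$. The only routine verification is the monotonicity of the outer parallel set in the underlying set and in the angle, which is immediate from the definition.
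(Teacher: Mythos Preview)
Your proof is correct and essentially identical to the paper's: both arguments enumerate the finitely many subsets of $\{v_1,\ldots,v_m\}$ lying in a closed hemisphere, take $\alpha$ to be the minimum of the corresponding weak Aleksandrov constants, and then for an arbitrary closed $\omega$ pass through $S_\omega=\omega\cap\{v_1,\ldots,v_m\}$ using monotonicity of the outer parallel set in both the base set and the angle. The only trivial case you leave implicit is $S_\omega=\varnothing$, where $\mu(\omega)=0$ and the inequality holds automatically.
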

\begin{rmk}
We refer to this $\alpha$ as \textit{the uniform weak Aleksandrov constant} for measures $\mu$ and $\lambda$.	
\end{rmk}

\begin{proof}
	Consider all possible $I\subset\{1,\ldots, m\}$ such that $\{v_i\}_{i\in I}$ are contained in a closed hemisphere. Let $\omega^I=\bigcup_{i\in I}{v_i}$. Since $\mu$ and $\lambda$ are weak Aleksandrov related for each $\omega^I$, we have $\mu(\omega^I)\leq\lambda(\omega^I_{\frac \pi 2-\alpha_I})$ for some $\alpha_I$. Since there are only finitely many of those $I$ satisfying the assumption, we can choose $\alpha>0$ to be the minimum of the $\alpha_I$'s. Now for any closed set $\omega$ contained in a closed hemisphere, we obtain that for some $I\subset\{1,\ldots, m\}$: 
\begin{equation}
  \mu(\omega)=\mu(\omega\cap\{v_i\}_{i\in \{1,\ldots, m\}})=\mu(\omega^I)\leq\lambda(\omega^I_{\frac \pi 2 - \alpha})\leq\lambda(\omega_{\frac \pi 2 - \alpha}),
\end{equation}
where the last step follows from set inclusion. 
\end{proof}

Finally, we note that the classical Aleksandrov relation easily implies the weak Aleksandrov relation. In the following, $\mu$ is not necessarily a discrete measure.

\begin{prop}\label{where mu is not discrete}
Suppose that a Borel measure $\mu$ is Aleksandrov related to a Borel measure $\lambda$. Then, $\mu$ is weakly Aleksandrov related to $\lambda$.
\end{prop}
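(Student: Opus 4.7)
The plan is to combine the strict inequality coming from the classical Aleksandrov relation with the continuity of $\lambda$ along the nested family $\{\omega_{\pi/2-\alpha}\}_{\alpha>0}$. Equality of total masses $\mu(\sn)=\lambda(\sn)$ is already part of the classical Aleksandrov hypothesis, so the only real content is producing, for each closed $\omega\subset\sn$ contained in a closed hemisphere, an $\alpha\in(0,1)$ with $\mu(\omega)\le\lambda(\omega_{\pi/2-\alpha})$.

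Fix such an $\omega$. First I would pass to the spherical convex hull $\tilde\omega$ of $\omega$, which is compact and spherically convex. From the definition (\ref{polarset}) of the polar set one checks that $\tilde\omega^*=\omega^*$: the inclusion $\tilde\omega^*\subset\omega^*$ is immediate from $\omega\subset\tilde\omega$, and for the reverse, any $u'\in\tilde\omega$ is the normalization of a positive combination of points of $\omega$, so $v\in\omega^*$ forces $u'\cdot v\le 0$. Applying the classical Aleksandrov relation to $\tilde\omega$ and using $\mu(\omega)\le\mu(\tilde\omega)$ then gives the strict inequality
\[
    \mu(\omega)\le\mu(\tilde\omega)<\lambda(\sn)-\lambda(\omega^*).
\]

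Next I would identify the right-hand side with a parallel set. Directly from the definitions, $\sn\setminus\omega^*=\{v\in\sn : u\cdot v>0 \text{ for some } u\in\omega\}=\omega_{\pi/2}$, so $\lambda(\sn)-\lambda(\omega^*)=\lambda(\omega_{\pi/2})$. The family $\{\omega_{\pi/2-\alpha}\}_{\alpha>0}$ is monotonically increasing as $\alpha\to 0^+$, because $\cos(\pi/2-\alpha)=\sin\alpha$ decreases to $0$, and its union equals $\omega_{\pi/2}$. Continuity of $\lambda$ from below therefore yields $\lambda(\omega_{\pi/2-\alpha})\to\lambda(\omega_{\pi/2})$ as $\alpha\to 0^+$. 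Combined with the strict inequality $\mu(\omega)<\lambda(\omega_{\pi/2})$, this lets me choose $\alpha\in(0,1)$ small enough that $\mu(\omega)\le\lambda(\omega_{\pi/2-\alpha})$, establishing the weak Aleksandrov relation.

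No step is an obstacle; the only subtlety is that the classical Aleksandrov condition is formulated for compact spherically convex sets, which forces the initial reduction to $\tilde\omega$, and this in turn relies on the invariance of the polar set under taking spherical convex hull.
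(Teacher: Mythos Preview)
Your proof is correct and follows essentially the same route as the paper: pass to the spherical convex hull, apply the classical Aleksandrov inequality there, and then use continuity of $\lambda$ along the increasing family $\omega_{\pi/2-\alpha}\uparrow\omega_{\pi/2}$. The only cosmetic difference is that you phrase the key reduction as the identity $\tilde\omega^{*}=\omega^{*}$ (hence $\lambda(\sn)-\lambda(\tilde\omega^{*})=\lambda(\omega_{\pi/2})$), whereas the paper argues the equivalent inclusion $\tilde\omega_{\pi/2}\subset\omega_{\pi/2}$ directly; both rest on the same positive-combination computation.
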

\begin{proof}
	Since $\mu$ is Aleksandrov related to $\lambda$, for each compact, spherically convex set $\omega\subset\sn$, we obtain: \begin{equation}
  \mu(\omega)<\lambda(\sn)-\lambda(\omega^*)=\lambda(\omega_{\frac \pi 2}).
\end{equation}
Now, consider any closed set $\gamma$ contained in a closed hemisphere. Let \begin{equation}
  \omega=\bla\gamma\bra:=\sn\cap\text{conv}\,(\text{cone}\,\gamma), 
\end{equation}
where $\text{conv}\,(\text{cone}\,\gamma)$ denotes the convex hull of $\text{cone}\,\gamma$.
Note that the convex hull of any set $S\in\rn$ is given by all finite convex combinations of elements in $S$. Thus, recalling the definition of a cone, we obtain the following: for each $v\in \omega$, there exist vectors $\{v_i\}_{i\in I}\subset \gamma$ with $I$ finite index set such that
\begin{equation}
  v=\sum_{i\in I}\sigma_iv_i \text{ for some } \sigma_i>0.
\end{equation}

First we want to establish that $\omega_{\frac \pi 2}\subset \gamma_{\frac \pi 2}$. Choose any $u\in \omega_{\frac \pi 2}$. Then for some $v\in \omega$, $u \cdot v>0$. Hence, for some $\{v_i\}_{i\in I}\subset \gamma$ with $I$ finite index set,
\begin{equation}
  u \cdot v=\sum_{i\in I}\sigma_i(u \cdot v_i)>0 \text{, where } \sigma_i>0.
\end{equation}
Thus, at least for one $i\in I$, we have that $u \cdot v_i>0$. Hence, $u\in {v_i}_{\frac \pi 2}\subset \gamma_{\frac \pi 2}$. We obtained the desired. 

Combining, we obtain the following chain of inequalities: 
\begin{equation}
  \mu(\gamma)\leq\mu(\omega)<\lambda(\omega_{\frac \pi 2})\leq\lambda(\gamma_{\frac \pi 2}).
\end{equation}
In particular, obtaining strict inequality $\mu(\gamma)<\lambda(\gamma_{\frac \pi 2}).$
By the continuity of measure $\lambda$, $\lambda(\gamma_{\frac \pi 2-\alpha})\rightarrow \lambda(\gamma_\frac \pi 2)$ as $\alpha\rightarrow 0$. Hence, for a given closed set $\gamma$ contained in a closed hemisphere, there exists an $\alpha$ such that 
\begin{equation}
	\mu(\gamma)<\lambda(\gamma_{\frac \pi 2-\alpha}).
\end{equation}
The weak Aleksandrov condition follows.
\end{proof}

One might wonder whether we can define the weak Aleksandrov relation merely by restricting the definition to the collection of compact spherically convex sets instead of closed sets contained in a closed hemisphere. We have not investigated this question, leaving it to the reader if they are interested.

\section{Essential Estimates and Partial Rescaling of a Polytope}

For the rest of the paper, we will assume that $\mu$ is a discrete Borel measure written as in \eqref{form of measure}, which is not concentrated on a closed hemisphere. We will also assume that $\lambda$ is an absolutely continuous measure. We begin with the following lemma, which enables us to concentrate our attention exclusively on polytopes.

\begin{lem}\label{assume to be polytope}
	Given any $K\in\kno$, there exists a polytope $P\in\po_\mu$ such that: \begin{equation}
\Phi(P,\mu,\lambda)\geq\Phi(K,\mu,\lambda).
\end{equation}

\end{lem}
\begin{proof}
	Choose any $K\in\kno$. Define $P$ as:
	\begin{equation}\label{4.1.1}
		P:=\text{conv}\{\rho_K(v_i)v_i\mid 1\leq i \leq m\}.
	\end{equation}
Clearly, $P\in \po_\mu$. Moreover, since $P$ is a convex hull of a subset of $K$, $P\subset K$. Hence, $h_P\leq h_K$, which implies that: \begin{equation}
	\int_{\sn}\log\rho_{K^*}d\lambda\leq\int_{\sn}\log\rho_{P^*}d\lambda.
\end{equation}
Simultaneously, by the definition of $P$, for any $i$ we have that $\rho_P(v_i)\geq \rho_K(v_i)$. Since $P\subset K$, we also have that $\rho_P(v_i)\leq \rho_K(v_i)$. Therefore, for all $i$, $\rho_P(v_i)= \rho_K(v_i)$, and, thus, 

\begin{equation}\label{4.1.2}
	\int_{\sn}\log\rho_Kd\mu=\int_{\sn}\log\rho_Pd\mu.
\end{equation}
Combining both equations \eqref{4.1.1} and \eqref{4.1.2}, we obtain that $\Phi(K,\mu,\lambda)\leq\Phi(P,\mu,\lambda)$. 
\end{proof}

Theorem 8.2 in \cite{GIP} shows that if $K\in\kno$ maximizes the functional $\Phi(\cdot,\mu,\lambda)$ for a Borel measure $\mu$ and an absolutely continuous measure $\lambda$, then $K$ solves the Gauss Image Problem. Thus, to prove Theorem \ref{main}, it is sufficient to establish the existence of a $K\in\kno$ that maximizes the functional. Lemma \ref{assume to be polytope} permits us to restrict bodies to polytopes of the above form, allowing us to work exclusively within the class $P_\mu$.

We start with some lemmas concerning the class $P_\mu$. For the rest of the article, we are going to work with the notation defined in \eqref{convex hull notation}-\eqref{notation UL 2}. Recall that for a given $P\in \po_\mu$, we call an $m$-tuple $\alpha$ to be a representation of $P$ if 
\begin{equation}
		\alpha=(\alpha_1,\ldots,\alpha_m)=(h_{P^*}(v_1),\ldots,h_{P^*}(v_m)).
	\end{equation}

\begin{lem}\label{technical estimate 1}
	Given $P\in \po_\mu$, let $\alpha$ be its representation. Then, for any $u\in \sn$: \begin{equation}\label{min rho}
  \rho_{P^*}(u)=\min\left\{\frac{\alpha_i}{u\cdot v_i}\large \,\middle\vert\, i\in\{1,\ldots,m\}, u\cdot v_i>0 \right\}.
\end{equation}
Moreover, given $u\in\sn$
\begin{equation}
	\rho_{P^*}(u)=\frac{\alpha_i}{u \cdot v_i}
\end{equation}
if and only if for some $i$, $r_{P^*}(u)\in H_{P^*}(v_i)$.
\end{lem}
\begin{proof}
Recall from \eqref{Form equation} that $P^*$ can be written as,
\begin{equation}\label{form in 4.2}
	P^*=\bigcap_{i=1}^mH^-(\alpha_i,v_i).
\end{equation}
Fix some $u\in \sn$. Suppose for some $i$, $u\cdot v_i\leq 0$. Then $H^-(\alpha_i,v_i) $ contains the entire ray in the direction of $u$ starting at the origin. Suppose now for a given $i$, $u\cdot v_i> 0$. Then a ray in the direction of $u$ starting at the origin intersects the hyperplane $H(\alpha_i,v_i)$. In this case, it is straightforward to verify that the distance from the origin to the intersection will be: \begin{equation}
  \frac{\alpha_i}{u\cdot v_i}.
\end{equation}
Therefore, looking back at the equation \eqref{form in 4.2}, we obtain that  \begin{equation}\label{min rho}
  \rho_{P^*}(u)=\min\left\{\frac{\alpha_i}{u\cdot v_i}\large \,\middle\vert\, i\in\{1,\ldots,m\}, u\cdot v_i>0 \right\}.
\end{equation}
The last part of the statement follows from equation \eqref{min rho} and \eqref{form in 4.2}.
\end{proof}

The following lemma is a core estimate, which will later be used to properly rescale the polytopes without decreasing the value of the functional.

\begin{lem}\label{4.14}
Given $P\in \po_\mu$, let $\alpha$ be its representation. If we are given a nonempty and not full index set $I\subset \{1,\ldots, m\}$, with $U^*<L$, then: \begin{equation}
 \bigcup_{i\notin I}({v_i})_{\arccos\frac {U^*} L}\subset \balpha_P(\bigcup_{i\notin I}v_i).
\end{equation}
\end{lem}
\begin{proof}

 Suppose that for a given direction $u\in\sn$, we have $\rho_{P^*}(u)<L$. Then, using Lemma \ref{technical estimate 1} we obtain
 
 \begin{equation}\label{4.3.1}
\min\left\{\frac{\alpha_i}{u\cdot v_i}\large \,\middle\vert\, i\in\{1,\ldots,m\}, u \cdot v_i>0 \right\}=\rho_{P^*}(u)<L.
\end{equation}
Suppose the minimum is achieved for some index $j$. Then, from the previous equation, using the definition of $L$, we obtain: 
\begin{equation}
	\frac{\alpha_j}{u\cdot v_j}<L=\min_{i\in I}\alpha_i. 
\end{equation} 
Thus, since $0<u\cdot v_j\leq 1$, we obtain that $\alpha_j<\min_{i\in I}\alpha_i$, and, hence, $j\notin I$. Now, applying the second part of Lemma \ref{technical estimate 1} we obtain that \begin{equation}
\begin{split}
\rho_{P^*}(u)=\frac{\alpha_j}{u\cdot v_j} \Leftrightarrow\\
	r_{P^*}(u)\in H_{P^*}(v_j) \Leftrightarrow \\
	r_{P}(v_j)\in H_{P}(u)  \Leftrightarrow \\
	u\in \balpha_P(v_j).
\end{split}
\end{equation}
Since $j\notin I$, we obtain $u\in\balpha_P(\bigcup_{i\notin I}v_i)$. 

Thus, we have established that if for a given direction $u\in\sn$, we have $\rho_{P*}(u)<L$, then $u\in\balpha_P(\bigcup_{i\notin I}v_i)$. Now, pick any $u\in ({v_j})_{\arccos\frac {U^*} L}$ for some $j\notin I$. We obtain $u\cdot v_j>\frac {U^*} L$. Thus, combining this with equation \eqref{4.3.1} and the fact that $j\notin I$, we obtain:

 \begin{equation}
  \rho_{P^*}(u)=\min\left\{\frac{\alpha_i}{u \cdot v_i}\large \,\middle\vert\, i\in\{1,\ldots,m\}, u\cdot v_i>0 \right\}\leq \frac{\alpha_j}{u\cdot v_j}\leq\frac{U^*}{u\cdot v_j}< L. 
\end{equation}
Thus, $\rho_{P*}(u)<L$ and the claim follows from the first part of the proof.

\end{proof}
 
In the upcoming proof of Theorem \ref{main}, we will utilize what we refer to as \textit{the partial rescaling of a polytope}. Let us now describe this construction. Suppose we are given a polytope $P\in\po_\mu$, along with a nonempty and not full index set $I\in \{1,\ldots, m\}$. Let $\alpha$ denote the representation of $P$. Recall that $P$ and $P^*$ can be written as 
\begin{equation}\label{kasdjfkajf}\begin{split}
	P&=\text{conv}\{ \frac {v_i} {\alpha_i} \mid 1\leq i \leq m \}, \\
 	P^*&=\bigcap_{i=1}^mH^-(\alpha_i,v_i).
\end{split}
\end{equation}

We would like to rescale the half spaces that correspond to the index set $I$ in the second formula by a factor $t>0$. This procedure will be called a \textit{partial rescaling of polytope} $P$ \textit{with respect to index set} $I$. In terms of the preceding formula, the partial rescaling of a polytope $P$ can be written as
\begin{equation}
\begin{split}\label{convex hull form of pt}
	P_t&=\text{conv}\big(\{ \frac {v_i} {\alpha_i} \mid i\in I\}\cup \{ \frac {v_i} {t\alpha_i} \mid i\notin I\} \big),  \\
 	P^*_t&=\bigcap_{i\in I}H^-(\alpha_i,v_i)\bigcap_{i\notin I}H^-(t\alpha_i,v_i).
\end{split}
\end{equation}

In fact, $P_t$ is the dual Wulff shape of the $m$-tuple $\gamma_t$ defined by: \begin{equation}
\begin{split}
	\gamma_{t,i}&=\alpha_i \text{ if } i\in I, \\ 
	\gamma_{t,i}&=t\alpha_i  \text{ if } i\notin I.
\end{split}
\end{equation}
See the Preliminaries section for the definitions. We will always assume that $t\in(0,1]$. The most important point to make about the partial rescaling is that the representation of $P_t$ is not necessarily equal to $\gamma_t$. For example, if the set $\{v_i\mid i\notin I\}$ is not contained in a closed hemisphere, $P^*_t$ will approach the origin in the Hausdorff distance as $t\rightarrow 0$, and, thus, for all $i\in \{1,\ldots, m \}$ we will have $\alpha_{t,i}\rightarrow 0$ while $\gamma_{t,i}=\alpha_{i}$ will remain constant. In the end, if $\alpha_t$ is the representation of $P_t$, we can only claim that: \begin{equation}
	\alpha_{t,i}\leq \gamma_{t,i},
\end{equation} 
as mentioned in \eqref{feb13}.

The following lemma characterizes the behavior of the partial rescaling. It can be seen as a discrete analog to the classical results about Wulff shapes.

\begin{lem}\label{super easy lemma}
	Suppose $P\in\po_\mu$ and $I\subset \{1,\ldots, m\}$ is a nonempty and not full index set. Let $\alpha$ be its dual representation. Consider the $m-$tuple $\gamma_t$ defined by:
\begin{equation}
\begin{split}
	\gamma_{t,i}&=\alpha_i \text{ if } i\in I, \\ 
	\gamma_{t,i}&=t\alpha_i  \text{ if } i\notin I.
\end{split}
\end{equation}
Let $P_t$ be the dual Wulff shape of $\gamma_t$, where $t\in(0,1]$. Let $\alpha_t$ denote its representation. Then,
\begin{equation}\begin{split}
t\alpha_i\leq \alpha_{t,i}\leq \alpha_i \text{ for } i\in I,  \\
	\alpha_{t,i}=t\alpha_i  \text{ if } i\notin I.
	\end{split}
\end{equation}

\end{lem}
\begin{proof}
	Let $i\notin I$. By the definition of the representation,
	\begin{equation}
		\alpha_{t,i}=h_{P^*_{t}}(v_i).
	\end{equation}
	From \eqref{convex hull form of pt}, we obtain that $tP^*\subset P^*_{t}$. Thus,
	\begin{equation}\label{feb13.1}
		\alpha_{t,i}=h_{P^*_{t}}(v_i)\geq th_{tP^*}(v_i)=t\alpha_i.
	\end{equation}
	On the other hand, from the definition of the dual Wulff shape we have that \begin{equation}
		P^*_{t}\subset P^*\cap H^-(t\alpha_i,v_i),
	\end{equation} which implies that
	\begin{equation}\label{feb13.2}
		\alpha_{t,i}=h_{P^*_{t}}(v_i)\leq t\alpha_i.
	\end{equation}
	Combining, \eqref{feb13.1} and \eqref{feb13.2} we obtain that for $i\notin I$
	\begin{equation}
		\alpha_{t,i}=t\alpha_i.
	\end{equation}
	
	If $i\in I$, then, as $tP^*\subset P^*_{t}\subset P^*$, we obtain
	\begin{equation}
		t\alpha_i\leq \alpha_{t,i}\leq \alpha_i.
	\end{equation}
\end{proof}

\section{The Partial Rescaling of a Polytope and the Functional}\label{section proof}
We now turn our attention to the key lemma related to the partial rescaling. Under the assumption of the weak Aleksandrov relation, as we have noted, we can no longer claim that for any sequence maximizing the functional, there is a lower bound on the inner to outer radius ratio. The following lemma provides us with the tool to overcome this difficulty. It establishes that, with proper assumptions, the partial rescaling does not decrease the value of the functional.

\begin{lem}\label{alpha beh}
Suppose $P\in\po_\mu$ and $I\subset \{1,\ldots, m\}$ is a nonempty and not full index set. Let $\alpha$ be its dual representation. Consider the $m$-tuple $\gamma_t$ defined by
\begin{equation}
\begin{split}
	\gamma_{t,i}&=\alpha_i \text{ if } i\in I, \\ 
	\gamma_{t,i}&=t\alpha_i  \text{ if } i\notin I.
\end{split}
\end{equation}
Let $P_t$ be the dual Wulff shape of $\gamma_t$. Suppose for some $0< t_0<1$ the following holds: \begin{equation}\label{Feb 16 assumption}
  \mu(\bigcup_{i\notin I}v_i)\geq\lambda(\balpha_{P_{t_0}}(\bigcup_{i\notin I}v_i)).
\end{equation}
Then, $\Phi(P_{t_0})\geq\Phi(P)$. 
\end{lem}

\begin{proof}
	Before we start to compare $\Phi(P_{t_0})$ with $\Phi(P)$, let us first analyze the behavior of the radial functions under the partial rescaling. Let $t$ be any value between $t_0$ and $1$. First, we claim that for any $u\in\sn$: \begin{equation}\label{formula for rho}
  \rho_{P_t^*}(u)= \min\Bigg\{\left\{\frac{\alpha_i}{u\cdot v_i}\large \,\middle\vert\, i\in I, u\cdot v_i>0 \right\}\bigcup\left\{\frac{t\alpha_i}{u\cdot v_i}\large \,\middle\vert\, i\notin I, u\cdot v_i>0 \right\}\Bigg\}.
\end{equation}
The proof of this claim is the same as the proof of Lemma \ref{technical estimate 1}, where instead of \eqref{form in 4.2} one should use \eqref{convex hull form of pt}.
We also notice that the equation \eqref{convex hull form of pt} implies the following relation:
\begin{equation}\label{some relation 1}
	\balpha_{P_{t_2}}(\bigcup_{i\notin I}v_i) \subset \balpha_{P_{t_1}}(\bigcup_{i\notin I}v_i),
\end{equation}
for $0<t_1<t_2\leq 1$. In fact, from \eqref{convex hull form of pt} it is straightforward to verify that
\begin{equation}\label{some relation 2}
	\bigcup_{0<t\leq1} \balpha_{P_{t}}(\bigcup_{i\notin I}v_i)=\bigcup_{i\notin I}({v_i})_{\frac \pi 2}.
\end{equation}

Now, examining the equations \eqref{formula for rho}, \eqref{some relation 1}, and \eqref{some relation 2}, we can separate three possible behaviors of $\rho_{P_t^*}(u)$ as a function of $t$ for $t\leq 1$:

 \begin{itemize}
	\item If $u\in \balpha_{P}(\bigcup_{i\notin I}v_i)$, then $u\in \balpha_{P}(v_j)$ for some $j\notin I$. Thus, $r_{P}(v_j)\in H_{P}(u)$  which is equivalent to $r_{P^*}(u)\in H_{P^*}(v_j)$. Thus, from Lemma \ref{technical estimate 1} we obtain\begin{equation}
		\rho_{P^*}(u)=\frac {\alpha_j} {u\cdot v_j},
	\end{equation}
	where $j\notin I$, and, in particular, the minimum in \eqref{formula for rho} is attained at $j\notin I$. Therefore, from \eqref{formula for rho} we obtain that: \begin{equation}\label{4.24sim}
		\rho_{P_t^*}(u)=t\rho_{P^*}(u).
	\end{equation}
	\item If $u\notin \bigcup_{i\notin I}({v_i})_{\frac \pi 2}$, then $u\cdot v_i\leq 0$ for all $i\notin I$. Thus, from equation \eqref{formula for rho} (or equation \eqref{some relation 2}) we obtain that:
	\begin{equation}\label{4.25sim}
		\rho_{P_t^*}(u)=\rho_{P^*}(u).
	\end{equation}
	\item If $u\in \bigcup_{i\notin I}({v_i})_{\frac \pi 2} \setminus \balpha_{P}(\bigcup_{i\notin I}v_i)$, then by applying equations \eqref{formula for rho}, \eqref{some relation 1}, and \eqref{some relation 2} in a way similar to the previous two cases, we obtain that as $t$ decreases, $\rho_{P_t^*}(u)$ remains constant up until the first moment when $u\in \balpha_{P_t}(\bigcup_{i\notin I}v_i)$, after which it starts to scale. Let $t(u)$ denote the maximum $t$ for which $u\in \balpha_{P_{t}}(\bigcup_{i\notin I}v_i)$. We obtain:
	\begin{equation}\label{4.26sim}
	\begin{split}
		\rho_{P_t^*}(u)= \rho_{P^*}(u) \text{ when }  t\in[t(u),1], \\
		\rho_{P_t^*}(u)= \frac {t} {t(u)} \rho_{P^*}(u) \text{ when }  t\in(0,t(u)]. \\
	\end{split}
	\end{equation}
	\end{itemize}

By looking at the dual, using Lemma \ref{super easy lemma} and the convex hull equation \eqref{convex hull form of pt} for $P$, we obtain the following behavior for $\rho_{P_t}(v_i)$:
\begin{itemize}
	\item If $i\notin I$, then \begin{equation}\label{4.27sim}
		\rho_{P_t}(v_i)=\frac {\rho_{P}(v_i)} t.
	\end{equation}
	\item If $i\in  I$, then $\rho_{P_t}(v_i)$ is non-decreasing as $t$ decreases and \begin{equation}\label{4.28sim}
		\rho_{P_t}(v_i)\leq\frac {\rho_{P}(v_i)} t.
	\end{equation}
\end{itemize}
Now with the help of equations \eqref{4.24sim}-\eqref{4.28sim} regarding the behavior of $\rho_{P_t}$ and $\rho_{P^*_t}$, we would like to compute $\Phi(P_{t_1})-\Phi(P_{t_2})$ for some $0<t_1<t_2\leq 1$. From equations \eqref{some relation 1} and \eqref{some relation 2}, we can separate $\Phi(P_{t_1})-\Phi(P_{t_2})$ into the following terms:
	\begin{equation}\label{very huge formula 2}\begin{split}
  &\Phi(P_{t_1})-\Phi(P_{t_2})= \\
   &\int_{\sn}\log(\rho_{P_{t_1}})-\log(\rho_{P_{t_2}})d\mu+\\&\int_{\balpha_{P_{t_2}}(\bigcup_{i\notin I}v_i)}	\log(\rho_{{P^*_{t_1}}})-\log(\rho_{{P^*_{t_2}}})d\lambda +\\ &\int_{\sn\setminus{\bigcup_{i\notin I}({v_i})_{\frac \pi 2}}}\log(\rho_{{P^*_{t_1}}})-\log(\rho_{{P^*_{t_2}}})d\lambda+\\
   &\int_{\bigcup_{i\notin I}({v_i})_{\frac \pi 2}\setminus\balpha_{P_{t_1}}(\bigcup_{i\notin I}v_i)}\log(\rho_{{P^*_{t_1}}})-\log(\rho_{{P^*_{t_2}}})d\lambda+\\
   &\int_{\balpha_{P_{t_1}}(\bigcup_{i\notin I}v_i)\setminus\balpha_{P_{t_2}}(\bigcup_{i\notin I}v_i)}\log(\rho_{{P^*_{t_1}}})-\log(\rho_{{P^*_{t_2}}})d\lambda.
\end{split}
\end{equation}
Now, we are going to estimate each term in the above equation.
\vspace{2mm}

\textit{The first term}\\
First, we look at the integral with respect to $\mu$. From \eqref{4.27sim} and since $\rho_{P_t}(v_i)$ is non-decreasing for $i\in I$, see \eqref{4.28sim}, we obtain that: 
	\begin{equation}\label{mu part}
	\begin{split}
		\int_{\sn}\log(\rho_{P_{t_1}})-\log(\rho_{{P_{t_2}}})d\mu= \\ \sum_{i\in I} \big ( \log(\rho_{P_{t_1}}(v_i))-\log(\rho_{{P_{t_2}}}(v_i))\big)\mu(v_i)+\sum_{i\notin I} \big ( \log(\rho_{P_{t_1}}(v_i))-\log(\rho_{{P_{t_2}}}(v_i))\big)\mu(v_i) \geq \\  \sum_{i\notin I} \big ( \log(\rho_{P_{t_1}}(v_i))-\log(\rho_{{P_{t_2}}}(v_i))\big)\mu(v_i) \geq \\
		\log(\frac{t_2}{t_1})\mu(\bigcup_{i\notin I}v_i).
	\end{split}
\end{equation}
\vspace{2mm}

\textit{The second term}\\
There are two possibilities here. First, recalling \eqref{some relation 1}, we have that  \begin{equation}
	{\balpha_{P}(\bigcup_{i\notin I}v_i)}\subset {\balpha_{P_{t_2}}(\bigcup_{i\notin I}v_i)}\subset {\balpha_{P_{t_1}}(\bigcup_{i\notin I}v_i)} .
\end{equation} If we suppose that $u\in {\balpha_{P}(\bigcup_{i\notin I}v_i)}$ we immediately obtain from \eqref{4.24sim} that 

\begin{equation}\begin{split}
	\log(\rho_{{P^*_{t_2}}}(u))-\log(\rho_{{P^*_{t_1}}}(u))=\\
	\log(\frac {t_1}{t_2}). 
\end{split}
\end{equation}
Alternatively, if $u\in {\balpha_{P_{t_2}}(\bigcup_{i\notin I}v_i)}\setminus {\balpha_{P}(\bigcup_{i\notin I}v_i)}$, then $t_1\leq t(u)$ and $t_2\leq t(u)$ and we use \eqref{4.26sim} to obtain the same result: 
\begin{equation}\begin{split}
	\log(\rho_{{P^*_{t_2}}}(u))-\log(\rho_{{P^*_{t_1}}}(u))=\\
	\log(\frac {t_1} {t(u)}\rho_{P^*}(u))-\log(\frac {t_2} {t(u)}\rho_{P^*}(u))=\\
	\log(\frac {t_1}{t_2}). 
\end{split}
\end{equation}
Combining the two, for the second term, we obtain 
\begin{equation}
	\int_{\balpha_{P_{t_2}}(\bigcup_{i\notin I}v_i)}	\log(\rho_{{P^*_{t_1}}})-\log(\rho_{{P^*_{t_2}}})d\lambda 
	 =\log(\frac {t_1}{t_2})\lambda(\balpha_{P_{t_2}}(\bigcup_{i\notin I}v_i)). 
\end{equation}
\vspace{2mm}

\textit{The third term}\\ From \eqref{4.25sim}, the third term is computed as: 
\begin{equation}
\begin{split}
	\int_{\sn\setminus{\bigcup_{i\notin I}({v_i})_{\frac \pi 2}}}\log(\rho_{{P^*_{t_1}}})-\log(\rho_{{P^*_{t_2}}})d\lambda= 0.
\end{split}
\end{equation}
\vspace{2mm}

\textit{The fourth and the fifth terms}\\
Now, we want to estimate the last two terms. As in \eqref{4.26sim}, given \begin{equation}
	u\in {\bigcup_{i\notin I}({v_i})_{\frac \pi 2}\setminus\balpha_{P_{t_2}}(\bigcup_{i\notin I}v_i)},
\end{equation} recall that $t(u)$ denote the maximum $t$ such that $u\in \balpha_{P_t}(\bigcup_{i\notin I}v_i)$. Notice that if \begin{equation}
	u\in {\bigcup_{i\notin I}({v_i})_{\frac \pi 2}\setminus\balpha_{P_{t_1}}(\bigcup_{i\notin I}v_i)}, \end{equation} from \eqref{some relation 1} and \eqref{some relation 2} we obtain that $t(u)< t_1< t_2$. And, thus, since $t_1,t_2\in[t(u),1)$, from \eqref{4.26sim} we obtain: 
	\begin{equation}
		\log(\rho_{{P^*_{t_1}}}(u))-\log(\rho_{{P^*_{t_2}}}(u))= \\
		\log(\rho_{{P^*_{}}}(u))-\log(\rho_{{P^*_{}}}(u))=0,
	\end{equation}
	which implies that the fourth term is zero. On the other hand, if \begin{equation}
	u\in {\balpha_{P_{t_1}}(\bigcup_{i\notin I}v_i)\setminus\balpha_{P_{t_2}}(\bigcup_{i\notin I}v_i)},
\end{equation} then $t(u)\in [t_1,t_2)$. Thus, from \eqref{4.26sim} we obtain:
\begin{equation}\label{integral part equation}
\begin{split}
	\lvert\int_{\balpha_{P_{t_1}}(\bigcup_{i\notin I}v_i)\setminus\balpha_{P_{t_2}}(\bigcup_{i\notin I}v_i)}\log(\rho_{{P^*_{t_1}}})-\log(\rho_{{P^*_{t_2}}})d\lambda\rvert = \\
	\lvert\int_{\balpha_{P_{t_1}}(\bigcup_{i\notin I}v_i)\setminus\balpha_{P_{t_2}}(\bigcup_{i\notin I}v_i)}\log(\frac {t_1} {t(u)} \rho_{P^*}(u))-\log(\rho_{{P^*_{t}}}(u))d\lambda\rvert \leq \\
	\lambda\big(\balpha_{P_{t_1}}(\bigcup_{i\notin I}v_i)\setminus\balpha_{P_{t_2}}(\bigcup_{i\notin I}v_i)\big)\lvert\log(\frac {t_1}{t_2})\rvert.
\end{split}
\end{equation}
Overall, we established \eqref{integral part equation} using the assumption that $0<t_1<t_2\leq1.$ Let us now investigate \eqref{integral part equation} under the additional assumption that $t_1$ is approximately $t_2$. 

From the continuity of measure $\lambda$ and since $\lambda$ an is absolutely continuous measure, as $t_1\rightarrow t_2^-$ (with $t_1<t_2$), we find that  \begin{equation}
  \lambda\big(\balpha_{P_{t_1}}(\bigcup_{i\notin I}v_i)\setminus\balpha_{P_{t_2}}(\bigcup_{i\notin I}v_i)\big)\rightarrow \lambda\big (\partial (\balpha_{P_{t_2}}(\bigcup_{i\notin I}v_i))\big)=0
\end{equation}
where $\partial$ denotes the boundary of the set in $\sn$. So, in particular, given any $\eps>0$ for all $t_1$ sufficiently close enough to $t_2$, the right side at the end of \eqref{integral part equation} is less than  $\eps\lvert\log (\frac {t_1}{t_2})\rvert$.

Combining all of the  estimates for different terms of \eqref{very huge formula 2}, we obtain that given any $t_2\in (0,1]$ and $\eps>0,$ we can pick $\delta>0$ with $t_2-\delta>0$ such that for all $t_1\in [t_2-\delta,t_2]$:
\begin{equation}\label{most important equation}
\begin{split}
\Phi(P_{t_1})-\Phi(P_{t_2})&\geq \log(\frac{t_2}{t_1})\mu(\bigcup_{i\notin I}v_i)+\log(\frac {t_1}{t_2})\lambda(\balpha_{P_{t_2}}(\bigcup_{i\notin I}v_i))-\eps\lvert\log \frac {t_1}{t_2}\rvert \\ &= 
\log(\frac {t_2} {t_1})\Big (\mu(\bigcup_{i\notin I}v_i)-\lambda(\balpha_{P_{t_2}}(\bigcup_{i\notin I}v_i))-\eps\Big ).
\end{split}
\end{equation}

Now, we are going to analyze two special cases of previous equation guided by the assumption \eqref{Feb 16 assumption}. 
\vspace{2mm}

\textit{Case 1. Given $t_2\in (0,1]$, suppose $\mu(\bigcup_{i\notin I}v_i)>\lambda(\balpha_{P_{t_2}}(\bigcup_{i\notin I}v_i))$}.

\vspace{2mm}
Then \eqref{most important equation}, implies that we can pick $\delta>0$ such that for all $t_1\in[t_2-\delta,t_2)$, $\Phi(P_{t_1})>\Phi(P_{t_2})$. 

\vspace{2mm}
\textit{Case 2. Given $t_2\in (0,1]$, suppose $\mu(\bigcup_{i\notin I}v_i)=\lambda(\balpha_{P_{t_2}}(\bigcup_{i\notin I}v_i))$}.

\vspace{2mm}
Then, from \eqref{Feb 16 assumption}, \eqref{some relation 1} and \eqref{some relation 2}, for $t_1\in(0,t_2]:$ \begin{equation}
  \lambda\big(\balpha_{P_{t_1}}(\bigcup_{i\notin I}v_i)\setminus\balpha_{P_{t_2}}(\bigcup_{i\notin I}v_i)\big)=0.
\end{equation}
This forces the last part of \eqref{integral part equation} to be equal to zero. Which in turn implies that
the equation \eqref{most important equation}, under this particular assumptions, refines to the following:
\begin{equation}
\begin{split}
\Phi(P_{t_1})-\Phi(P_{t_2})&\geq \log(\frac{t_2}{t_1})\mu(\bigcup_{i\notin I}v_i)+\log(\frac {t_1}{t_2})\lambda(\balpha_{P_{t_2}}(\bigcup_{i\notin I}v_i)) \\ &= 
\log(\frac {t_2} {t_1})(\mu(\bigcup_{i\notin I}v_i)-\lambda(\balpha_{P_{t_2}}(\bigcup_{i\notin I}v_i)))=0.
\end{split}
\end{equation}
Therefore, in this particular case, for all $t_1\in (0,t_2]$ we obtain that $\Phi(P_{t_1})\geq\Phi(P_{t_2})$.

We are now fully prepared to conclude the proof. Recall that we were given $t_0<1$ such that \begin{equation}\label{the last step}
  \mu(\bigcup_{i\notin I}v_i)\geq\lambda(\balpha_{P_{t_0}}(\bigcup_{i\notin I}v_i)).
\end{equation}   
From \eqref{some relation 1}, we obtain that the same statement holds for any $t_2\in[t_0,1]$:
\begin{equation}
  \mu(\bigcup_{i\notin I}v_i)\geq\lambda(\balpha_{P_{t_2}}(\bigcup_{i\notin I}v_i)).
\end{equation} 
Suppose now that $\Phi(P_{t_0})<\Phi(P_1)$. Since $\Phi(P_t)$ is continuous, let $t_2\in(t_0,1]$ be the smallest value such that $\Phi(P_{t_2})=\Phi(P_1)$. Then, for all $t_1\in [t_0,t_2)$, $\Phi(P_{t_1})<\Phi(P_{t_2})$. If 
\begin{equation}
	\mu(\bigcup_{i\notin I}v_i)>\lambda(\balpha_{P_{t_2}}(\bigcup_{i\notin I}v_i))
\end{equation} 
then a contradiction arises from Case 1. If
\begin{equation}
	\mu(\bigcup_{i\notin I}v_i)=\lambda(\balpha_{P_{t_2}}(\bigcup_{i\notin I}v_i))
\end{equation}
then, from Case 2, we would obtain that $\Phi(P_{t_0})=\Phi(P_{t_2})$, which is also a contradiction. Therefore, $\Phi(P_{t_0})\geq \Phi(P_1)$, which was the  desired.
  \end{proof}
The next Lemma is a core component of the proof of Theorem \ref{main}, where we utilize two previous technical results: Lemma \ref{4.14} and Lemma \ref{alpha beh}. Before we begin with the proof, let us first try to explain its statement in a more intuitive way. Suppose we are given some $P\in\po_\mu$, and a nonempty and not full index set $I\subset \{1,\ldots, m\}$. Suppose for this index set we have the following: 
\begin{gather}
	0<L^*\leq U^*<L\leq U=1 \quad \\
	\frac L U \approx 1, \text{ } \frac {U^*} L \approx 0,\text{ } \frac {L^*} {U^*} \approx 1.
\end{gather}
Suppose $\mu$ is weakly Aleksandrov related to $\lambda$ and $\alpha$ is the uniform weak Aleksandrov constant for $\mu$ and $\lambda$ given by Proposition \ref{uniform constant}. Then, Lemma \ref{rescaling} claims that we can find a new polytope $P_{\frak r}\in\po_\mu$  with $\Phi(P_{{\frak r}})\geq \Phi(P)$ such that:
\begin{gather}
	0<L_{\frak r}^*\leq U_{\frak r}^*<L_{\frak r}\leq U_{\frak r}=1 \quad \\
	\frac {L_{\frak r}} {U_{\frak r}} \approx 1, \text{ } \frac {U_{\frak r}^*} {L_{\frak r}} \approx \cos(\frac \pi 2 -\alpha),\text{ } \frac {L_{\frak r}^*} {U_{\frak r}^*} \approx 1.
\end{gather}
Vaguely speaking, Lemma \ref{rescaling} allows us to construct a new polytope with coefficients that are closer together, without decreasing the functional value.

Even though the Aleksandrov condition is not stated explicitly, it is embedded in the following Lemma as part of the assumption about: \begin{equation}
	\mu(\bigcup_{i\notin I}v_i)\leq\lambda(\bigcup_{i\notin I}(v_i)_{\frac\pi2-\alpha}).
\end{equation}
We again recall that we use the notation established in \eqref{convex hull notation}-\eqref{notation UL 2}.  

\begin{lem}\label{rescaling}
	Suppose $P\in\po_\mu$ is such that $\max_i\alpha_i=1$, and $I\subset \{1,\ldots, m\}$ is a nonempty and not full index set. Suppose \begin{equation}\label{like Alex}
		\mu(\bigcup_{i\notin I}v_i))\leq\lambda(\bigcup_{i\notin I}(v_i)_{\frac\pi2-\alpha})
	\end{equation} for some $  0 < \alpha < \frac \pi 2$. Suppose ${U^*} <L{\cos({\frac{\pi}{2}-\alpha})}$. In particular, $0<L^*\leq U^*<L\leq U=1$.  Then, there exists $P_{{\frak r}}\in\po_\mu$, such that:
	\begin{equation}
	\begin{split}
		U_{\frak r}&=1, \\
		L_{\frak r}&=L, \\
		U^*_{\frak r}&\leq L\cos(\frac \pi 2 - \alpha), \\
		L^*_{\frak r}&= \frac {L^*} {U^*} L\cos(\frac \pi 2 - \alpha).
	\end{split}
	\end{equation}
 Moreover, $\alpha_i=\alpha_{{\frak r},i}$ for $i\in I$ and $\Phi(P_{\frak r})\geq\Phi(P)$. 
\end{lem}

\begin{proof}
	We want to rescale $P$ for index set $I^c=\{1,\ldots,m\}\setminus I$ as:
	\begin{equation}\label{Feb 18 partially rescalling}
  P^*_t=\bigcap_{i\in I^c}H^-(\alpha_i,v_i)\bigcap_{i\notin I^c}H^-(t\alpha_i,v_i).
\end{equation}
Note that we partially rescale the polytope $P$ with respect to index set $I^c$, which is the opposite of the rescaling used in Lemma \ref{alpha beh} and Lemma \ref{super easy lemma}. (See \eqref{convex hull form of pt} as well.) Let
\begin{equation}\label{18 defenition of t}
	t_0=\frac {U^*} {L\cos(\frac \pi 2 - \alpha)}.
\end{equation}
From our assumptions, $0<t_0<1$. Our goal is to analyze $P^*_{t_0}$ and to confirm that $\Phi(P_{t_0})\geq \Phi(P)$ with the help of Lemma \ref{alpha beh}. Recall, the equation \eqref{formula for rho} from the proof of Lemma \ref{alpha beh}. Using it, we can write for $u\in\sn$: \begin{equation}\label{representation formula for proof}
  \rho_{P_t^*}(u)=\min\Bigg\{\left\{\frac{\alpha_i}{u\cdot v_i}\large \,\middle\vert\, i\in I^c, u\cdot v_i>0 \right\}\bigcup\left\{\frac{t\alpha_i}{u\cdot v_i}\large \,\middle\vert\, i\notin I^c, u\cdot v_i>0 \right\}\Bigg\}.
\end{equation}
	Moreover, from Lemma \ref{super easy lemma} we obtain that
	\begin{equation}\label{Feb 19 equation}\begin{split}
t\alpha_i\leq \alpha_{t,i}\leq \alpha_i \text{ for } i\in I^c,  \\
	\alpha_{t,i}=t\alpha_i  \text{ if } i\notin I^c.
	\end{split}
\end{equation} 
Thus, recalling the definitions of $L_t^*(I),U_t^*(I),L_t(I),U_t(I)$, it follows from previous equation that
 \begin{equation}\label{Feb 18, large equation}
  \begin{split}
  	&U_t=tU, \\
  	&L_t=tL, \\
  	U^*\geq &U^*_t\geq tU^*.
  \end{split}
\end{equation}
Notice that from the previous equation and the equation \eqref{18 defenition of t}: \begin{equation}\label{crucial estimate super}
	U^*_{t_0}\leq U^*=t_0L\cos(\frac \pi 2 - \alpha)= L_{t_0}\cos(\frac \pi 2 - \alpha).
\end{equation}
In particular, 
\begin{equation}
	U^*_{t_0}<L_{t_0}.
\end{equation}
Therefore, we can apply Lemma \ref{4.14} to $P_{t_0}$ to obtain:
\begin{equation}
 \lambda(\bigcup_{i\notin I}({v_i})_{\arccos\frac {U_{t_0}^*} {L_{t_0}}})\leq \lambda(\balpha_{P_{t_0}}(\bigcup_{i\notin I}v_i)).
\end{equation}
This, combined with the assumption \eqref{like Alex} and the estimate \eqref{crucial estimate super}, gives us the following:
\begin{equation}\label{4.21}
\mu(\bigcup_{i\notin I}v_i))\leq\lambda(\bigcup_{i\notin I}(v_i)_{\frac\pi2-\alpha})\leq \lambda(\bigcup_{i\notin I}({v_i})_{\arccos\frac {U_{t_0}^*} {L_{t_0}}})\leq \lambda(\balpha_{P_{t_0}}(\bigcup_{i\notin I}v_i)).
\end{equation}
Therefore, since $\balpha_{P_{t_0}}(\bigcup_{i\notin I}v_i)\bigcap\balpha_{P_{t_0}}(\bigcup_{i\in I}v_i)$ is a $\lambda$ measure zero set, as it has Lebesgue measure zero, and since $\mu$ and $\lambda$ have equal weights, we obtain from the equation above that the reverse holds for $\balpha_{P_{t_0}}(\bigcup_{i\in I}v_i)$, that is 
\begin{equation}
\begin{split}
\mu(\bigcup_{i\in I}v_i)\geq\lambda(\balpha_{P_{t_0}}(\bigcup_{i\in I}v_i)).
\end{split}
\end{equation}
By rewriting this to
\begin{equation}\label{4.22}
\begin{split}
\mu(\bigcup_{i\notin I^c}v_i)\geq\lambda(\balpha_{P_{t_0}}(\bigcup_{i\notin I^c}v_i)),
\end{split}
\end{equation}
we can apply Lemma \ref{alpha beh} (again, notice that we partially rescale index set $I^c$) to conclude:
\begin{equation}\label{Feb 18.1}
	\Phi(P_{t_0})\geq\Phi(P).
\end{equation}

We claim that $P_{\frak r}:=t_0P_{t_0}$ is the desired polytope. To prevent possible confusion, let us remark that ${\frak r}$ in $P_{\frak r}$ is just a convenient notation and does not correspond to partial rescaling of $P$ as in  \eqref{Feb 18 partially rescalling} by a factor $\frak r$. On the other hand, $P_{t_0}$ is a partial rescaling of $P$ given by formula  \eqref{Feb 18 partially rescalling}.

 From \eqref{Feb 18.1} we  obtain,
\begin{equation}
	\Phi(P_{\frak r})=\Phi(P_{t_0})\geq \Phi(P).
\end{equation}
What remains is to establish the values of $U^*_{\frak r}, L^*_{\frak r}, L_{\frak r}$ as well as $\alpha_{{\frak r},i}$ for $i\in I$.

Firstly, we notice from \eqref{some geometric obvious relation}: \begin{equation}\label{4.5777}
  L_t^*\geq \min_{u\in\sn}{h_{P_t^*}(u)}=\min_{u\in\sn}{\rho_{P_t^*}(u)}.
\end{equation}
We are interested in whether $\rho_{P_t^*}(u)$ decreases. Notice that for $t\geq t_0$, we have \begin{equation}\label{so many equations}
  t\geq t_0=\frac {U^*} {L\cos(\frac \pi 2 - \alpha)}\geq\frac {L^*} {L\cos(\frac \pi 2 - \alpha)}>\frac{L^*}{L}. 
\end{equation}
Now, from the previous equation, for any $i\in I$ if $u\cdot v_i>0$, since $u\cdot v_i\leq 1$, we have \begin{equation}
  \frac{{t}\alpha_i}{u\cdot v_i}>\frac{L^*\alpha_i}{L}>L^*.
\end{equation}
We also have that for $i\notin I$ and $u\cdot v_i>0$: \begin{equation}
  \frac{\alpha_i}{u\cdot v_i}\geq L^*.
\end{equation}
Combining both previous equations and using \eqref{representation formula for proof}, we obtain that $\rho_{P^*_{t}}(u)\geq L^*$ for any $u\in\sn$ and for any $t\geq t_0$. Therefore, we can apply \eqref{4.5777} to deduce that $L_t^*\geq L^*$. This, combined with the fact that $L^*_{t}$ can only decrease as $t$ decreases and that $L_1^*=L^*$, imply that $L^*_t=L^*$ for $t\geq t_0$.

Summarizing this with \eqref{Feb 18, large equation}, we obtain that for $t\geq t_0$:  
\begin{equation}\label{everything combined}
  \begin{split}
  	&U_t=tU, \\
  	&L_t=tL,\\
  	U^*\geq &U^*_t\geq tU^*, \\
  	&L^*_t=L^*.
  \end{split}
\end{equation}
Recall that $\alpha_t$ is a representation for $P_t$. Notice that \begin{equation}
	\max_i\alpha_{t,i}= \max(U_t,U^*_t).
\end{equation} Now, since $U=1$, from the definition of $t_0$ and \eqref{everything combined} we obtain \begin{equation}
  U^*_{t}\leq U^* = t_0L\cos(\frac{\pi} 2 - \alpha)<t_0=t_0U=U_{t_0}.
\end{equation}
Therefore, combining two previous equations, we obtain
\begin{equation}\label{Feb 18 346}
	\max_i\alpha_{t,i}= \max(U_t,U^*_t)=U_{t}=t.
\end{equation} 
Recall that we defined $P_{\frak r}$ to be equal to $t_0P_{t_0}$. Thus, $P_{\frak r}^*=\frac {P_{t_o}^*}{t_0}$, and, therefore, $\alpha_{{\frak r},i}=\frac{\alpha_{{t_o},i}}{t_0}$. In particular, from \eqref{Feb 18 346}, we obtain that 
\begin{equation}
	\max_i\alpha_{\frak r,i}= \frac {\max_i\alpha_{t_0,i}}{t_0}=1.
\end{equation} 
Thus, from \eqref{everything combined}, we obtain for $P_{\frak r}=t_0P_{t_0}$: \begin{equation}
\begin{split}
&U_{\frak r}=\frac {U_{t_0}}{t_0}=1, \\
&L_{\frak r}= \frac {L_{t_0}} {t_0}= L,\\ 
&U^*_{\frak r}= \frac {U_{t_0}^*} {t_0} \leq \frac {U^*} {t_0} = L\cos(\frac \pi 2 - \alpha),\\
  &L^*_{\frak r}=\frac {L^*_{t_0}} {t_0}=\frac {L^*} {U^*} L\cos(\frac \pi 2 - \alpha).
 \end{split} 
\end{equation}

Now, it only remains to show that for $i\in I$, we have that $\alpha_i=\alpha_{\frak r,i}$. This immediately follows from \eqref{Feb 19 equation}:
\begin{equation}
	\alpha_{r,i}=\frac {\alpha_{t_0,i}} {t_0}=\alpha_{i}.
\end{equation}

\end{proof}

\section{Proof of the Main Result}

We are ready to start the proof Theorem \ref{main}. Our strategy is first to pick a sequence of polytopes that maximize the functional $\Phi$. Then, we will use Lemma \ref{rescaling} to modify this sequence, ensuring that it converges to a non-degenerate convex polytope. The proof heavily relies on the notations from \eqref{convex hull notation}-\eqref{notation UL 2} for varying index sets. We recall that, given a polytope with index $\fn$, as in $P_{\fn}$, and an index set $I\in\{1\dots m\}$ we write
\begin{equation}\begin{split}
  U_{\fn}(I):=&\max_{i\in I}\alpha_i, \\ L_{\fn}(I):=&\min_{i\in I}\alpha_i,\\ U_{\fn}^*(I):=&\max_{i\notin I}\alpha_i,\\ L_{\fn}^*(I):=&\min_{i\notin I}\alpha_i.
  \end{split}
\end{equation}
where $\alpha_\fn=(\alpha_{\fn,1},\dots,\alpha_{\fn,m})$ is the representation of $P_\fn$.

\begin{prop}\label{maximizer}
Suppose $\mu$ is a discrete measure not concentrated on a closed hemisphere and $\lambda$ is an absolutely continuous Borel measure.  Suppose $\mu$ is weakly Aleksandrov related to $\lambda$. Then there exists a sequence of polytopes $P_\fn\in\po_\mu$ maximizing $\Phi(\cdot)$, such that it converges to some $P\in\po_\mu$.	
\end{prop}
\begin{proof}
	Let $(P_\fn)_{\fn=1}^{\infty}$ be any sequence that maximizes the functional. For each $\fn$, let $\alpha_{\fn}$ be the representation of $P_\fn$. Rescale each $P_\fn$ so that $\max_{i}\alpha_{\fn,i}=1$. Since the set $\{v_i\mid 1\leq i \leq m\}$ is not contained in a closed hemisphere, it follows from  $\max_{i}\alpha_{\fn,i}=1$ that there exists $R>0$ such that for all $\fn$, $\mathfrak R_{P^*_{\fn}}<R$. Thus, we obtained a sequence that maximizes the functional and has a uniform bound on the outer radii of the duals.
	
	For every permutation $\sigma$ in $S_m$, where $S_m$ represents the set of all possible permutations of $m$ elements, we define the set $A_\sigma\subset  \mathbb N$ to contain all indices $\fn$ such that:
	\begin{equation}
		1= \alpha_{\fn,\sigma(1)} \geq \alpha_{\fn,\sigma(2)} \geq \ldots \alpha_{\fn,\sigma(m)} >0. 
	\end{equation}
	Since $\mathbb N=\cup_{\sigma\in S_m}A_\sigma$, there exists $\sigma\in S_m$ such that one of these sets is infinite. Without loss of generality, we can assume that the set $A_\sigma$ is infinite, corresponding to the identity permutation $\sigma$. We then take the subsequence of $(P_\fn)_{\fn=1}^{\infty}$ containing only elements in $A_\sigma$. Since we will never use the original sequence, we redefine the constructed subsequence to be $(P_\fn)_{\fn=1}^{\infty}$.
	
	Thus, we obtain a sequence of polytopes $(P_\fn)_{\fn=1}^{\infty}$ maximizing the functional such that for each $\fn$:
		\begin{equation}\label{ordering of coefficients}
		1= \alpha_{\fn,1} \geq \alpha_{\fn,2} \geq \ldots \geq \alpha_{\fn,m} >0 .
	\end{equation} 
	Using Bolzano--Weierstrass theorem, we can pass to the subsequence, which we again redefine to be $(P_\fn)_{\fn=1}^{\infty}$, such that $P^*_\fn$ converges to some convex set $K$, which can be written as:
\begin{equation}\label{form of P main}
K=\bigcap_{i=1}^mH^-(\alpha_i,v_i),
\end{equation}
where $\alpha_i$ are given by
\begin{equation}
\lim_{\fn\rightarrow \infty}\alpha_{\fn,i}=\alpha_i.
\end{equation} 
Moreover, through repeated application of Bolzano--Weierstrass theorem, we can assume as well that for each $i\in\{1,\ldots, m-1\}$, there exists $\beta_i\in [0,1]$:
\begin{equation}\label{beta equation 1}
	\lim_{\fn\rightarrow \infty} \frac {\alpha_{\fn,{i+1}}} {\alpha_{\fn,i}} = \beta_i,
\end{equation}
since $0<\alpha_{\fn,{i+1}}\leq {\alpha_{\fn,i}}$	from \eqref{ordering of coefficients}. Notice that from \eqref{beta equation 1} we also trivially obtain the following equation:
\begin{equation}\label{beta equation 2}
	\lim_{\fn\rightarrow \infty} \frac {\alpha_{\fn,{i+k}}} {\alpha_{\fn,i}} = \prod_{0\leq j\leq k-1} \beta_{i+j},
\end{equation} Finally, the following holds for $\alpha
_i$:
\begin{equation}\label{inequalities for alpha}
	1= \alpha_{1} \geq \alpha_{2} \geq \ldots \geq \alpha_{m} \geq0. 
\end{equation} 

Note that if all coefficients $\alpha_i>0$, then $K$ contains the origin in its interior, and, thus, $P_{\fn}$ will converge to $K^*=P\in \po_\mu$, which is the desired. Suppose it is not the case. Then, in particular, from \eqref{inequalities for alpha}, we obtain that $\alpha_m=0$. Since $\alpha_1=1$, we obtain that at least one of the variables $\beta_i$ is equal to zero. 
 
 Now, we will construct index sets $I_j$ that correspond to what we call as \textit{different rates of convergence of} $\alpha_{\fn,i}$ \textit{to zero}. Suppose there are exactly $k$ indices, $i_0 \leq i_1 \leq \ldots \leq i_{k-1}$,  for which $\beta_{i_j}$ is equal to zero, where $j\in\{0,\ldots,k-1\}$. For convenience, let $i_k=m$. We define:
 \begin{equation}
 \begin{split}
 	I_0=&\{1,\ldots, i_0\},\\
 	I_1=&\{i_0+1,\ldots ,i_1\},\\
 	\vdots \\
 	I_k=&\{i_{k-1}+1,\ldots ,m\}.
 \end{split}
 \end{equation}  
Notice that by construction, these sets are nonempty and not full, and their union is $\{1,\ldots, m\}$. Moreover, from \eqref{ordering of coefficients}, \eqref{beta equation 1}, and from the definitions of sets $I_j$ and indices $i_j$, we obtain the following inequalities:
\begin{equation}\label{a lot of inequalities 1}
		1=U_\fn(I_0)\geq L_\fn(I_0) \geq U_\fn(I_1)\geq L_{\fn}(I_1)\geq\ldots \geq U_\fn(I_k)\geq L_\fn(I_k)>0,
	\end{equation}
\begin{equation}\label{a lot of inequalities 2}
	\lim_{\fn\rightarrow\infty}\frac{U_\fn(I_{j+1})}{L_{\fn}(I_{j})}=\lim_{\fn\rightarrow\infty}\frac{\alpha_{\fn,{i_j+1}}}{\alpha_{\fn,i_j}}=\beta_{i_j}=0,
\end{equation}	
\begin{equation}\label{a lot of inequalities 3}
	\lim_{\fn\rightarrow\infty}\frac{L_\fn(I_{j})}{U_{\fn}(I_{j})}=\lim_{\fn\rightarrow\infty}\frac{\alpha_{\fn,{i_j}}}{\alpha_{\fn,{i_{j-1}}+1}}> c_j \text{ for some constant } c_j>0.\end{equation}	

Intuitively, each $I_j$ contains indices of elements that converge to 0 at the same rate. For example, from \eqref{form of P main}, \eqref{a lot of inequalities 1}-\eqref{a lot of inequalities 3}, we see that $i\in I_0$ if and only if $\alpha_i>0$. On the other hand, elements with an index in the set $I_k$ converge to zero the fastest. We call such a sequence of polytopes to be \textit{degenerate of order k}. If we have a convergent sequence with limit $K$ that is degenerate of order 0, then for all $i\in \{1,\ldots,m\}$, $\alpha_i>0$ and, hence, $K^*\in \po_\mu$. Therefore, to prove the proposition, it is sufficient to show that for a given sequence that is degenerate of order $k>0$, we can always find a new sequence of polytopes that is degenerate of a strictly lesser order than $k$ and such that the new sequence still maximizes the functional. This will be our goal for the rest of the proof.

Let $I=I_0\cup I_1 \ldots \cup I_{k-1}$. Note that the set $\{v_i \mid i\notin I\}$ is contained in a closed hemisphere as otherwise $P_\fn^*$ would converge to zero everywhere, which would contradict our assumption that $\max_{i}\alpha_{\fn,i}=1$. Since $\mu$ and $\lambda$ are weak Aleksandrov related, using Proposition \ref{uniform constant}, we obtain that there exists a uniform weak Aleksandrov constant $\alpha>0$. Therefore, since the set $\{v_i \mid i\notin I\}$ is a closed set and contained in a closed hemisphere, we can write:
\begin{equation}\label{to use 1}
   \mu(\bigcup_{i\notin I}v_i))\leq\lambda(\bigcup_{i\notin I}(v_i)_{\frac\pi2-\alpha}).
\end{equation}
From \eqref{a lot of inequalities 2} we can find $N$ such that  $\forall\fn>N$,
\begin{equation}\label{to use 2}
	\frac{U_{\fn}^*(I)}{L_\fn(I)}=\frac{U_{\fn}(I_k)}{L_\fn(I_{k-1})} < \cos(\frac\pi 2 - \alpha).
\end{equation} 
It should also be noted that for all $\fn$ we have that $U_\fn(I) = U_\fn(I_0)=1$. Thus, after combining this with \eqref{to use 1} and \eqref{to use 2}, for each $\fn>N$, we can apply Lemma \ref{rescaling} to $P_\fn$ and the index set $I$ to construct the partially rescaled polytopes $P_{r,\fn}$. Let $(P_{r,\fn})_{\fn=N}^{\infty}$ be the newly obtained sequence of partially rescaled polytopes. Let $\alpha_{r,\fn}$ be the representations of $P_{r,\fn}$.

First of all, note that, according to Lemma \ref{rescaling}, we have $\Phi(P_{r,\fn})\geq \Phi(P_{\fn})$. Therefore,  the newly obtained sequence sequence still maximizes the functional $\Phi(\cdot)$. Secondly, from Lemma \ref{rescaling}, we also have that  for $i\in I$,
\begin{equation}\label{sdjk}
	\alpha_{r,\fn,i}=\alpha_{\fn,i}.
\end{equation} We also note the following identities based on Lemma \ref{rescaling}:
\begin{equation}\label{ajohfh}
\begin{split}
	U_{r,\fn}(I)=&1, \\
	 L_{r,\fn}(I)=&L_\fn(I), \\
	U^*_{r,\fn}(I)\leq & L_{\fn}(I)\cos(\frac \pi 2 - \alpha), \\
	L^*_{r,\fn}(I)=& \frac {{L^*_\fn}(I)} {{U^*_\fn}(I) }L_\fn(I)\cos(\frac \pi 2 - \alpha).
\end{split}	
\end{equation}

From \eqref{ordering of coefficients}, \eqref{sdjk}, and \eqref{ajohfh}, we obtain: 
\begin{equation}\label{ordering of coefficients new}
		1 =\alpha_{\fn,1} = \alpha_{r,\fn,1} \geq \alpha_{\fn,2} =\alpha_{r,\fn,2} \geq \ldots = \alpha_{r,\fn,i_{k-1}} \geq \frac {U^*_{r,\fn}(I)} {\cos(\frac \pi 2 - \alpha)} > {U^*_{r,\fn}(I)}>0.
	\end{equation} 
	Therefore, sets $I_0, I_1, \ldots, I_{k-2}$, which contained coefficients with a rate of convergence up to $k-2$, remain the same for the newly constructed sequence of polytopes $(P_{r,\fn})_{\fn=N}^{\infty}$. Moreover, $I_{k-1}$ still contains the coefficients that converge to zero with a rate $k-1$. We will show that for the subsequence of the newly constructed sequence of rescaled polytopes, the coefficients from the index set $I_k$ converge with a rate $k-1$ as well.   
	
From \eqref{ajohfh} and \eqref{a lot of inequalities 3}, for coefficients in $I_k$ we have the following bound:
	\begin{equation}\label{everything converges}
		1\geq \frac{L^*_{r,\fn}(I)} {U^*_{r,\fn}(I)} \geq \frac {{L^*_\fn}(I)} {{U^*_\fn}(I) } = \frac {{L_\fn}(I_k)} {{U_\fn}(I_k) } > c_k>0.
	\end{equation}
	Moreover, from \eqref{ajohfh} and \eqref{a lot of inequalities 3}, we have:
	\begin{equation}\label{when will this end}
		\frac {L^*_{r,\fn}(I)} {\alpha_{r,\fn,i_{k-1}}} =  \frac {L^*_{r,\fn}(I)} {L_\fn(I)}= \frac {{L^*_\fn}(I)} {{U^*_\fn}(I) }\cos(\frac \pi 2 - \alpha)>c_k \cos(\frac \pi 2 - \alpha) >0.
	\end{equation}

	While it is true that for $i\in I$, coefficients $\alpha_{r,\fn,i}$ converge as they are equal to $\alpha_{\fn,i}$, they might not do so for $i\notin I$. By applying Bolzano--Weierstrass theorem and choosing new subsequence, we can ensure that they converge. Moreover, \eqref{everything converges} guarantees that we can apply the same construction as in the beginning to ensure that all the ratios between the elements converge for $i\notin I$ to some values in the range $[c_k,\frac 1 {c_k}]$. Since the proof does not change, for the sake of notational simplicity, we assume that all elements, as well as all their ratios, converge for $i\notin I$ in $(P_{r,\fn})_{\fn=N}^{\infty}$. As at the beginning of the proof, we pick some subsequence, so that for some permutation $\sigma$ of elements in $I_k$, we have the following based on \eqref{ordering of coefficients new}:
	\begin{equation}\label{ordering of coefficients new 2}
		1= \alpha_{r,\fn,1} \geq \alpha_{r,\fn,2} \geq \ldots \geq \alpha_{r,\fn,i_{k-1}}  > {U^*_{r,\fn}(I)}=\alpha_{r,\fn,\sigma(i_{k-1}+1)}\geq \ldots \geq \alpha_{r,\fn,\sigma(m)} >0.
	\end{equation} 
	
This, combined with \eqref{when will this end}, establishes that all coefficients with index $i\in I_k$ converge the same as coefficient $\alpha_{r,\fn,i_{k-1}}$:
\begin{equation}
	\lim_{\fn\rightarrow \infty} \frac {\alpha_{r,\fn,i}}{\alpha_{r,\fn,i_{k-1}}} \geq  \lim_{\fn\rightarrow \infty}  \frac {L^*_{r,\fn}(I)} {\alpha_{r,\fn,i_{k-1}}} >c_k \cos(\frac \pi 2 - \alpha) >0.
\end{equation}
Since $\alpha_{r,\fn,i_{k-1}}$ converges with a rate $k-1$, we have constructed a sequence of polytopes that is degenerate of order $k-1$. This finishes the proof.
\end{proof}

The proof of Theorem \ref{main} immediately follows by an application of Theorem 8.2 in \cite{GIP}.

\begin{proof}[\textbf{Theorem \ref{main}} Proof.] By Proposition \ref{maximizer}, there exists $P\in\po_\mu\subset\kno$ that maximizes the functional. Since $\mu$ is a Borel measure and $\lambda$ is an absolutely continuous Borel measure, we infer from Theorem 8.2 in \cite{GIP} that $\mu=\lambda(P,\cdot)$. 
\end{proof}

To conclude, let us prove another Proposition which characterizes the bound on the inner to outer radius ratio for the solution to the Gauss Image Problem. The existence of the uniform constant comes from Proposition \ref{uniform constant}.

\begin{prop}\label{weak Aleks bound}
Suppose $\mu$ is a discrete measure that is not concentrated on a closed hemisphere and $\lambda$ is an absolutely continuous Borel measure.  Suppose $\mu$ is weak Aleksandrov related to $\lambda$.  Let $\alpha$ be their uniform weak Aleksandrov constant. Then, there exists a polytope solution $P\in\po_\mu$ to the Gauss Image Problem such that the ratio $\frac{\mathfrak r_P}{\mathfrak R_p}$ is bounded from below by a constant depending only on vectors $v_i$ and the uniform weak Aleksandrov constant $\alpha$. Apart from $\alpha$ and vectors $v_i$, this constant is independent of $\lambda$.
 \end{prop}
\begin{proof}
By Theorem \ref{main}, there exists $P\in\po_\mu$ solving the Gauss Image Problem for measures $\mu$ and $\lambda$. Consider any sequence of solutions $P_\fn\in\po_\mu$, with $\max_i\alpha_{\fn,i}=1$, that maximizes the ratio\begin{equation}
  \frac{\min_i\alpha_{\fn,i}}{\max_i\alpha_{\fn,i}}.
\end{equation}
By compactness, there exists a subsequence converging to the body $P\in\po_\mu$. Since $P$ still maximizes the functional, it is a solution by Theorem 8.2 in \cite{GIP}. Let $\alpha$ be the representation for $P$. We also have that $\max_i\alpha_{i}=1$ and if $\beta$ is the representation for any other solution $P'$ to the Gauss Image Problem, then 
\begin{equation}\label{ration to improve}
	\frac{\min_i\alpha_{i}}{\max_i\alpha_{i}} \geq \frac{\min_i\beta_{i}}{\max_i\beta_{i}}.
\end{equation}

For this $P$, reorder the index set so that $1=\alpha_1\geq\alpha_{2}\geq...\geq\alpha_m>0$. Define $I_l=\{1\dots l\}$. Let $k>1$ be the integer such that the vectors $\{v_i\mid i\notin {I_{k}}\}$ are contained in a closed hemisphere, but the vectors $\{v_i\mid i\notin {I_{k-1}}\}$ are not. Clearly, $k<m-1$. Then, for any index set $I_l$ with $l\geq k$, we have that $\{v_i\mid i\notin {I_{l}}\}$ are contained in a closed hemisphere. Thus, from Proposition \ref{uniform constant}, we have \begin{equation}
  \mu(\bigcup_{i\notin I_l}v_i))\leq\lambda(\bigcup_{i\notin I_l}(v_i)_{\frac\pi2-\alpha}),
\end{equation}
where $\alpha$ is the uniform weak Aleksandrov constant. 

Suppose \begin{equation}\label{wrong equation}
	{U^{*}(I_l)} <{L(I_l)}{\cos({\frac{\pi}{2}-\alpha})}.
\end{equation} Then, we are able to apply Lemma \ref{rescaling} to find another body $P_r$ such that 
\begin{equation}\label{4.96}
	\begin{split}
		L_r(I_l)&=L(I_l),\\
		L^{*}_r(I_l)&= \frac {L^{*}(I_l)} {U^{*}(I_l)} L(I_l)\cos(\frac \pi 2 - \alpha)>L^{*}(I_l). 
	\end{split}
\end{equation}
Notice that $L^*(I_l)<L(I_l)$. This, combined with \eqref{4.96}, gives
\begin{equation}
	\min \big (L_r(I_l)),L^{*}_r(I_l)\big)>L^{*}(I_l).
\end{equation}
So, in particular, if $\alpha_r$ is the representation for $P_r$, then 
\begin{equation}
	\min_i \alpha_{r,i}= \min(L^{*}_r(I_l),L_r(I_l))>L^{*}(I_l)=\min_i\alpha_i.
\end{equation} Therefore, we obtain that\begin{equation}\label{compare}
   \frac{\min_i\alpha_{r,i}}{\max_i\alpha_{r,i}}> \frac{\min_i\alpha_{i}}{\max_i\alpha_{i}}.
\end{equation}
Since $\Phi(P_r)\geq\Phi(P)$, we have that $P_r$ is still a solution to the Gauss Image Problem by Theorem 8.2 from \cite{GIP}. Therefore, \eqref{compare} contradicts \eqref{ration to improve}.

 Thus, we obtain that for all $l\geq k$ the opposite to \eqref{wrong equation} holds, that is \begin{equation}
 {U^{*}(I_l)} \geq{L(I_l)}{\cos({\frac{\pi}{2}-\alpha})}.
\end{equation}
In particular, since $1=\alpha_1\geq\alpha_{2}\geq...\geq\alpha_m>0$ we obtain for $l\geq k$
\begin{equation}
	\alpha_{l+1}\geq \alpha_l \cos(\frac \pi 2 - \alpha).
\end{equation}
Thus, we obtain
\begin{equation}\label{finally to use in last lemma}
	\alpha_{m}\geq \alpha_k  \cos(\frac \pi 2 - \alpha)^{m-k}.
\end{equation}
And, therefore, \begin{equation}\label{adlfjnia}
	\min_{u\in\sn}\rho_{P^*}(u)=\alpha_m\geq \alpha_k\cos(\frac \pi 2 - \alpha)^{m-k}.
\end{equation}

 Now, consider $\{v_i\mid i\notin {I_{k-1}}\}$. We define $\gamma$ as following: \begin{equation}\begin{split}
 	\gamma=\inf\left\{{u \cdot v_i}\large \,\middle\vert\, u\in\sn, i\notin {I_{k-1}}, u \cdot v_i>0 \right\}.
 \end{split}
\end{equation}
Since $\{v_i\mid i\notin {I_{k-1}}\}$ are not contained in a closed hemisphere, we obtain that $\gamma>0$. Thus, from Lemma \ref{technical estimate 1} we have that for all $u\in \sn$
\begin{equation}\label{41}
\begin{split}
  \rho_{P^*}(u)=& \\ \min\left\{\frac{\alpha_i}{u\cdot v_i}\large \,\middle\vert\, i\in\{1,\ldots,m\}, u\cdot v_i>0 \right\}  \leq&\\
  \min\left\{\frac{\alpha_i}{u\cdot v_i}\large \,\middle\vert\, i\notin I_{k-1}, u\cdot v_i>0 \right\} \leq& \\
  \frac {\alpha_k}{\gamma}.
\end{split}
\end{equation}
Combining this with equation \eqref{adlfjnia} we obtain: \begin{equation}
  \frac {\mathfrak r_P}{\mathfrak R_P}=\frac {\mathfrak r_{P^*}}{\mathfrak R_{P^*}}= \frac{\min \rho_{P^*}(u)}{\max \rho_{P^*}(u)}\geq \frac {\alpha_k\cos(\frac \pi 2 - \alpha)^{m-k}}{\frac{\alpha_k}{\gamma}}\geq \gamma \cos(\frac \pi 2 - \alpha)^{m-k}.
\end{equation}
\end{proof}

It would be interesting to consider whether the above approach can be used to solve the Gauss Image Problem with the weak Aleksandrov condition when $\lambda$ is an absolutely continuous measure, and no additional discrete conditions are imposed on the measure $\mu$, as was done in \cite{GIP} for the classical Aleksandrov condition. The natural approach would be to discretize $\mu$ and to try to invoke Proposition \ref{weak Aleks bound}. Yet, we notice that the bound on the inner to outer radius ratio for the solution to the discrete problem, obtained in the Proposition \ref{weak Aleks bound}, significantly depends on the structure of this discretization. 

\begin{conj}\label{GIP conj}
Suppose $\mu$ and $\lambda$ are Borel measures on $\sn$, where $\lambda$ is absolutely continuous. If $\mu$ is not concentrated on a closed hemisphere and is weakly Aleksandrov related to $\lambda$, then there exists a convex body $K$ containing the origin in its interior, such that $\mu=\lambda(K,\cdot)$.
\end{conj}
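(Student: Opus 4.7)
The plan is to reduce the general case to the discrete case of Theorem \ref{main} via approximation. Specifically, I would approximate $\mu$ by a sequence of discrete Borel measures $\mu_\fn$, each weak Aleksandrov related to $\lambda$ with a single uniform constant $\alpha$ and each not concentrated on a closed hemisphere; Theorem \ref{main} then produces polytope solutions $P_\fn \in \po_{\mu_\fn}$ with $\mu_\fn = \lambda(P_\fn,\cdot)$. The goal is to extract a subsequential Hausdorff limit $P_\fn \to P \in \kno$ and conclude $\mu = \lambda(P,\cdot)$ by standard continuity properties of the Gauss image measure under absolutely continuous $\lambda$.

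To construct the $\mu_\fn$, I would first establish that $(\mu,\lambda)$ admits a uniform weak Aleksandrov constant $\alpha$, by an argument analogous to Proposition \ref{uniform constant}: approximate any closed $\omega$ in a closed hemisphere by a finite net and use absolute continuity of $\lambda$ together with a compactness argument on the space of such $\omega$. Then take a refining sequence of finite Borel partitions $\{B_{\fn,i}\}$ of $\sn$ with mesh $\eps_\fn \to 0$, set $\mu_\fn = \sum_i \mu(B_{\fn,i})\,\delta_{v_{\fn,i}}$, and observe that for any closed $\omega$ in a closed hemisphere $\mu_\fn(\omega) \leq \mu(\omega_{\eps_\fn}) \leq \lambda((\omega_{\eps_\fn})_{\frac\pi2-\alpha})$; by absolute continuity of $\lambda$, choosing $\eps_\fn$ small enough yields the weak Aleksandrov relation for $\mu_\fn$ with a slightly worsened but still uniform constant $\alpha'$. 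To prevent $\mu_\fn$ from collapsing into a closed hemisphere and to control later geometric constants, I would ensure that each partition contains a fixed finite set $V\subset\mathrm{supp}\,\mu$ of atoms (with $\mu$-mass bounded uniformly from below) that is itself not contained in any closed hemisphere; this requires a preliminary lemma showing such $V$ exists whenever $\mu$ is not concentrated on a closed hemisphere.

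The main obstacle will be preventing degeneration of $P_\fn$. Lemma \ref{weak Aleks bound} produces a lower bound $r_{P_\fn}/R_{P_\fn}\geq \cos(\frac\pi2-\alpha)^{k_\fn}/\gamma_\fn$, but both the exponent $k_\fn$ and the constant $\gamma_\fn$ depend on the combinatorial data of $\mathrm{supp}\,\mu_\fn$ and a priori degenerate as $\fn\to\infty$. The fixed spanning set $V$ pins $\gamma_\fn$ uniformly from below, so the real difficulty is the exponent $k_\fn$. To control it, I would refine the proof of Lemma \ref{weak Aleks bound}: instead of iterating through all atoms in order, iterate through equivalence classes of atoms grouped by a fixed finite decomposition of $\sn$ into spherical caps respecting $V$, so that the chain of $\cos(\frac\pi2-\alpha)$ factors is bounded by a combinatorial constant depending only on $V$ and $\alpha$. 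The rescaling lemma (Lemma \ref{rescaling}) and the partial rescaling machinery of Lemma \ref{maximizer} should then produce, in place of each $P_\fn$, a competitor $\widetilde P_\fn$ with the same or larger functional value satisfying $r_{\widetilde P_\fn}/R_{\widetilde P_\fn} \geq C(V,\alpha)>0$.

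With such a uniform inner-to-outer radius bound, Blaschke selection yields $\widetilde P_\fn \to P \in \kno$ along a subsequence. The verification that $P$ solves $\mu = \lambda(P,\cdot)$ is the routine concluding step: weak convergence $\mu_\fn\to\mu$ combined with Hausdorff convergence $\widetilde P_\fn \to P$ gives $\lambda(\widetilde P_\fn,\cdot) \to \lambda(P,\cdot)$ weakly (since $\lambda$ is absolutely continuous and $\balpha_{\widetilde P_\fn}$ converges $\lambda$-a.e.\ to $\balpha_P$). Thus the genuine difficulty lies entirely in extending the partial rescaling technique to produce a uniform non-degeneracy bound for the approximating polytope solutions, and is precisely the place where the discrete structure exploited in this paper must be replaced by a more geometric, support-based argument.
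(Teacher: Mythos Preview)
The paper does not prove this statement; it is stated as Conjecture~\ref{GIP conj} and left open. The remark immediately following it anticipates exactly your approach---discretize $\mu$ and invoke Lemma~\ref{weak Aleks bound}---and flags the obstruction: the radial bound in Lemma~\ref{weak Aleks bound} depends on the structure of the discretization in a way that degenerates as the mesh shrinks.

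Your proposal does not close this gap. The bound produced by Lemma~\ref{weak Aleks bound} is $r_P/R_P\geq \cos(\tfrac\pi2-\alpha)^k/\gamma$, where $k$ counts atoms, ordered by their $\alpha_i$ values, until the configuration first fails to lie in a closed hemisphere. As the partition is refined, $k_\fn$ can grow without bound, and your suggestion to replace atoms by ``equivalence classes grouped by a fixed finite decomposition of $\sn$ into spherical caps'' does not work as stated: the rescaling step in Lemma~\ref{rescaling} compares $U^*$ and $L$ for an index split determined by the \emph{ordering of the $\alpha_i$}, and nothing forces atoms lying in the same geometric cap to have comparable $\alpha_i$ values in the solution $P_\fn$. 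Treating a cap as a single unit would require a genuinely new version of Lemma~\ref{rescaling} in which the index set $I$ is dictated by spherical geometry rather than by the values of the $\alpha_i$, and no such lemma is provided here or in the paper. This is precisely the ``more geometric, support-based argument'' you allude to at the end, and it is the missing idea, not a routine refinement.

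Two subsidiary steps are also incomplete. First, establishing a uniform weak Aleksandrov constant for a general Borel $\mu$ by ``compactness on the space of closed $\omega$'' is delicate: $\omega\mapsto\mu(\omega)$ is only upper semicontinuous under Hausdorff convergence of closed sets, so a direct compactness argument does not yield a uniform $\alpha$; the paper proves uniformity only for discrete $\mu$ (Proposition~\ref{uniform constant}). Second, for non-atomic $\mu$ there are no points of $\mathrm{supp}\,\mu$ carrying positive $\mu$-mass, so the ``fixed finite set $V\subset\mathrm{supp}\,\mu$ of atoms with $\mu$-mass bounded uniformly from below'' must be reformulated in terms of partition cells rather than atoms of $\mu$.
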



\frenchspacing
\bibliographystyle{cpam}

\begin{thebibliography}{99}


\bibitem{Aleks1}
A. Aleksandrov, An application of the theorem on the invariance of the domain to existense proofs, \textit{Izv. Akad. Nauk SSSR Ser. Math.} \textbf{3} no.3 (1939) 243--256.

\bibitem{Aleks0}
A. Aleksandrov, On the theory of Mixed Volumes. III. Extension of two theorems of Minkowski on convex polyhedra to arbitrary convex bodies. \textit{Mat. Sbornik N.S.} \textbf{3} (1938), 27-46.

\bibitem{Aleks}
A. Aleksandrov, Existence and uniqueness of convex surface with a given integral curvature. \textit{C. R. (Doklady) Acad. Sci. URSS (N.S.)} \textbf{35} (1942), 131-134.


\bibitem{Caffarelli}
L. Caffarelli, Interior $W^{2,p}$-estimates for solutions of the Monge-Amp\'ere equation, \textit{Ann. Math} \textbf{131} (1990) 135-150.

\bibitem{L^p BM}
S. Chen, Y. Huang, Q.-R. Li, and J. Liu, The $L^p-$Brunn-Minkowski inequalities for $p<1$. \textit{Adv. Math} 368:107166 (2020).

\bibitem{Cheng}
S.-Y. Cheng and S.-T. Yau, On the regularity of the solution of the n-dimensional Minkowski problem, \textit{Commun. Pure Apple. Math.} \textbf{29} (1976) 495--516.

\bibitem{CW06adv}
K.-S.~Chou and X.-J.~Wang,
\textit{The {$L\sb p$}-Minkowski problem and the Minkowski problem
in centroaffine geometry},
Adv. Math.
\textbf{205} (2006), 33--83.


\bibitem{Bertrand} J. Bertrand, Prescription of Gauss curvature using optimal mass transport. \textit{Geom. Dedicata} \textbf{183} (2016), 81--99.

\bibitem{Besau} F. Besau and E. M. Werner, The floating body in real space forms. \textit{J. Differential Geom.} \textbf{110} (2008), no.2, 187--220.



\bibitem{BHP17jdg}
K. J. B\"or\"oczky, M. Henk, and H. Pollehn,
\textit{Subspace concentration of dual curvature measures of symmetric convex bodies},
J. Differential Geom.
\textbf{109} (2018), 411--429.


\bibitem{BLYZ13jams}
K. J. B\"or\"oczky, E. Lutwak, D. Yang, and G. Zhang,
\textit{The logarithmic Minkowski problem},
J. Amer. Math. Soc. (JAMS)
\textbf{26} (2013), 831--852.




\bibitem{log}
K. J. B\"or\"oczky, E. Lutwak, D. Yang and G. Zhang, The log-Brunn-Minkowski inequality, \textit{Adv. Math.} 231 (2012) 1974-1997.

\bibitem{GIP}
K. J. B\"or\"oczky, E. Lutwak, D. Yang, G. Zhang and Y. M. Zhao, The Gauss Image Problem, \textit{Commun. Pure Apple. Math.} \textbf{Vol.LXXIII} (2020), 1046--1452. 


\bibitem{G06book}
R.J.~Gardner,
\textit{Geometric Tomography},
Second edition, Encyclopedia of Mathematics and its Applications,
Cambridge University Press, Cambridge,
 2006.
 

 \bibitem{Gruberbook}
P.M.~Gruber,
\textit{Convex and discrete geometry},
Grundlehren der Mathematischen Wissenschaften, 336, Springer,
Berlin, 2007.

\bibitem{Haberl} C. Haberl and F.E Schuster, Asymmetric affine $L^p$ Sobolev inequalities, \text{J. Funct. Anal} \textbf{257} (2009), 641--658.

\bibitem{Ramon}
R. Van Handel, The local logarithmic Brunn-Minkowski inequality for zonoids, Preprint: arXiv:2202.09429 (2022).

\bibitem{Lp Aleks}
Y. Huang, E. Lutwak, D. Yang and G. Zhang,
The $L_p$-Aleksandrov problem for $L_p$-Integral curvature,
\textit{J. Differential Geometry} \textbf{110} (2018), no. 1, 1--29.


\bibitem{HLYZ16}
Y. Huang, E. Lutwak, D. Yang, and G. Zhang,
Geometric measures in the Brunn-Minkowski theory and their associated Minkowski problems, \textit{Acta Math.} \textbf{216} (2016), 325--388.


\bibitem{HZ18adv}
Y. Huang and Y. Zhao,
\textit{On the $L_p$ dual Minkowski problem},
Adv. Math. \textbf{332} (2018), 57--84.


\bibitem{KolesnikovMilman}
A. V. Kolesnikov and E. Milman, Local $L^p$-Brunn-Minkowski inequalities for $p<1$, \textit{Mem. Amer. Math. Soc.} \textbf{277} (2022)
 
\bibitem{LutwakLp} 
E. Lutwak, The Brunn-Minkowski-Firey Theory. I. Mixed volumes and the Minkowski problem,  \textit{J. Differential Geometry}  \textbf{38} (1993) 131--150.
 
\bibitem{Lutwak}  
E. Lutwak, Dual mixed volumes, \textit{Pacific J. Math.}, \textbf{58} (1975), 531-538.



\bibitem{LO95jdg}
E.~Lutwak and V.~Oliker,
\textit{On the regularity of solutions to a generalization of the Minkowski problem},
J. Differential Geom. \textbf{41}
 (1995), 227--246.

\bibitem{LYZ00jdg}
E.~Lutwak, D.~Yang, and G.~Zhang,
\textit{${L}\sb p$ affine isoperimetric inequalities},
J. Differential Geom. \textbf{56} (2000), 111--132.



\bibitem{LYZ04tams}
E.~Lutwak, D.~Yang, and G.~Zhang,
\textit{On the $L\sb p$-Minkowski problem},
Trans. Amer. Math. Soc. \textbf{356} (2004),  no. 11, 4359--4370.


\bibitem{LYZ06imrn}
E.~Lutwak, D.~Yang, and G.~Zhang,
\textit{Optimal Sobolev norms and the $L\sp p$ Minkowski problem},
 Int. Math. Res. Not.
2006, Art. ID 62987, 21.

\bibitem{Sobolev}
E.~Lutwak, D.~Yang, and G.~Zhang, Sharp affine $L^p$ Sobolev inequalities \textit{J. Differential Geom.} \textbf{62}(2002), 17--38.

\bibitem{LYZ16}
E.~Lutwak, D.~Yang, and G.~Zhang,
\textit{$L_p$ dual curvature measures},
Adv. Math. \textbf{329} (2018), 85--132.



\bibitem{Stephanie}
S. Mui, On the $L^p$ Aleksandrov Problem for negative p, \textit{Adv. math}, 408 (2022). 

\bibitem{Nirenberg}
L. Nirenberg, The Weyl and Minkowski Problems in differential geometry in the Large, \textit{Commun. Pure Appl. Math.}, \textbf{6} (1953), 337--394.

\bibitem{Ol2}
V. Oliker,
\textit{Existence and uniqueness of convex hypersurfaces with prescribed
Gaussian curvature in spaces of constant curvature},
Sem. Inst. Matem. Appl. Giovanni Sansone (1983), 1--64.

\bibitem{Ol21}
V. Oliker,
\textit{Hypersurfaces in $\mathbb R^{n+1}$ with prescribed Gaussian curvature and related equations of Monge-Amp\` ere type}, 
Comm. Partial Differential Equations
\textbf{9} (1984), 807--838.

\bibitem{Oliker}
V. Oliker, Embedding $\sn$ into $\mathbb R^{n+1}$ with given integral Gauss curvature and optimal mass transport on $\sn$. \textit{Adv. Math} \textbf{213} no. 2 (2007) 600-620.

\bibitem{Pogorelov}
A.V. Pogorelov, The Minkowski Multidimensional Prroblem, \textit{V.H. Winston and Sons} (1978) Washington, D.C. 


\bibitem{Saroglou}
C. Saroglou, Remarks on the conjectured log-Brunn-Minkowski inequality. \textit{Geom. Dedicata}, \textbf{177} (2015), 353-365.

\bibitem{S14}
R.~Schneider,
\textit{Convex Bodies: The Brunn-Minkowski Theory},
 Second Edition, Encyclopedia of Mathematics and its Applications, Cambridge
 University Press, Cambridge (2014).


\bibitem{Sta1}
A. Stancu,
\textit{The discrete planar $L_0$-Minkowski problem},
Adv. Math. \textbf{167} (2002), 160--174.


\bibitem{Trudinger} N. S. Trudinger and X.-J. Wang, The Monge-Amp\`ere equation and its geometric applications. \textit{Handb. Geom. Anal.} (2008), 467--524.


\bibitem{YZCVPDE}
Y. Zhao,
\textit{The dual Minkowski problem for negative indices},
Calc. Var. Partial Differential Equations 56(2):Art. 18, 16,
  2017.

\bibitem{YZJDG}
Y. Zhao,
\textit{Existence of solutions to the even dual {M}inkowski problem},
\textit{J. Differential Geom.} \textbf{110} (2018), 543--572.



\bibitem{Zhao}
Y. M. Zhao, The $L^p$ Aleksandrov problem for origin-symmetric polytopes, \textit{Proc. Amer. Math. Soc.}  \textbf{147} (2019), 4477-4492. 



\bibitem{Zu}
G. Zhu,
\textit{The logarithmic Minkowski problem for polytopes},
Adv. Math. \textbf{262} (2014), 909--931.

\bibitem{Zu2}
G. Zhu,
\textit{The centro-affine Minkowski problem for polytopes},
J. Differential Geom. \textbf{101} (2015), 159--174.


\end{thebibliography}

\end{document}